\theoremstyle{plain}
\newtheorem{theorem}{Theorem}[section]
\newtheorem{proposition}[theorem]{Proposition}
\newtheorem{lemma}[theorem]{Lemma}
\newtheorem{corollary}[theorem]{Corollary}
\theoremstyle{remark}
\newtheorem{definition}[theorem]{Dfinition}
\newtheorem{remark}[theorem]{Remark}
\newtheorem{example}[theorem]{Example}
\newtheorem{question}[theorem]{Question}
\newcommand{\AH}{\mathcal{A(H)}}
\newcommand{\B}{\mathcal{B}}
\newcommand{\bC}{\mathbb{C}}
\newcommand{\BH}{\mathcal{B(H)}}
\newcommand{\bN}{\mathbb{N}}
\newcommand{\bZ}{\mathbb{Z}}
\newcommand{\cA}{\mathcal{A}}
\newcommand{\card}{\textup{card}}
\newcommand{\cK}{{\mathcal{K}}}
\newcommand{\cH}{{\mathcal{H}}}
\newcommand{\cso}{\textsl{CSO}}
\newcommand{\dist}{\textup{dist}}
\newcommand{\eps}{\varepsilon}
\newcommand{\icso}{\textsl{ICSO}}
\newcommand{\ind}{\textup{ind}}
\newcommand{\KH}{\mathcal{K(H)}}
\newcommand{\la}{\langle}
\newcommand{\nul}{\textup{nul}~}
\newcommand{\onb}{\textsc{onb}}
\newcommand{\ra}{\rangle}
\newcommand{\ran}{\textup{ran}}
\newcommand{\rank}{\textup{rank}}
\newcommand{\rcso}{\textsl{RCSO}}
\begin{document}

\title[Reducible and irreducible approximation of CSOs]
{Reducible and irreducible approximation of complex symmetric operators}

 \author[T. Liu]{Ting Liu}
 \address{Institute of Mathematics\\
   Jilin University\\
   Changchun 130012\\
   P. R. China}
   \email{tingliu17@mails.jlu.edu.cn}

 \author[J. Zhao]{Jiayin Zhao} 
 \address{Institute of Mathematics\\Jilin University\\Changchun 130012\\P. R. China}
    \email{zhaojiayin2014@163.com}

\author[S. Zhu]{Sen Zhu}
\address{Department of Mathematics\\Jilin University\\Changchun 130012\\P. R. China}
\email{zhusen@jlu.edu.cn}

\thanks{Supported by National Natural Science Foundation of China (11671167).}

\subjclass[2010]{Primary 47A58; Secondary 47A15}

\keywords{Complex symmetric operators, approximation, irreducible operators, reducible operators}

\begin{abstract}
This paper aims to study reducible and irreducible approximation in the set $\textsl{CSO}$ of all complex symmetric operators on a separable, complex Hilbert space $\mathcal H$. When ${\rm dim} \mathcal H=\infty$, it is proved that both those reducible ones and those irreducible ones are norm dense in $\textsl{CSO}$. When ${\rm dim} \mathcal H<\infty$, irreducible complex symmetric operators constitute an open, dense subset of $\textsl{CSO}$.
\end{abstract}

\maketitle


\section{Introduction}

Throughout this paper, we let $\cH$ denote a separable complex
Hilbert space endowed with the inner product $\la\cdot,\cdot\ra$. We always denote by
$\BH$ the collection of bounded linear operators on $\cH$.
Recall that an operator $T\in\BH$ is {\it irreducible} if it does not commute with any nontrivial projection; otherwise, $T$ is called {\it reducible}.

In 1968, Halmos \cite{Halmos68} proved that the set of irreducible operators is a dense $G_\delta$ in $\BH$.
Later on, Halmos \cite{Halmos70} raised ten problems in Hilbert spaces and his Problem 8 asked: \textit{Is every operator the norm limit of reducible ones?}
In order to answer the problem above, Voiculescu \cite{Vocu76} obtained the well-known noncommutative Weyl-von Neumann Theorem, which gives an affirmative answer to Halmos' question. By Voiculescu's Theorem, each operator acting on infinite-dimensional Hilbert space is approximately unitarily equivalent to a reducible operator and hence a norm limit of reducible operators. Recall that two operators $A,B\in\BH$ are {\it approximately unitarily equivalent} if there is a sequence $\{U_n\}_{n=1}^\infty$ of unitary operators such
that $U_nA-BU_n\rightarrow 0$ as $n\rightarrow\infty$.

The aim of the present study is to prove similar approximation results in the context of complex symmetric operators. To proceed we first introduce some notations and terminology.

Let $C$ be a {\it conjugation} on $\cH$, that is, $C$ is conjugate-linear, invertible with $C^{-1}=C$ and $\langle Cx, Cy\rangle=\langle y,x\rangle$ for all $x, y\in\cH$. An operator $T\in\BH$ is called $C$-{\it symmetric} if $CTC=T^*$. If $T$ is $C$-symmetric for some conjugation $C$, then $T$ is called {\it complex symmetric}. Complex symmetric operators are natural generalizations of symmetric matrices in the Hilbert space setting.  The general study of complex symmetric operators, especially in the infinite-dimensional case, was initiated by Garcia, Putinar and Wogen (see \cite{Gar06,Gar07,Gar09,Gar10} for references), and has recently received much attention (e.g., \cite{Berco,Noor15,Jung,NoorJFA,WangYao,Zhu2015OAM,Zhu2017OAM}).

Recently there has been some interest in the approximation of complex symmetric operators, mainly in infinite-dimensional case. The study on approximation issues helps people get a better understanding of the internal structure of complex symmetric operators.
Now we review some results known. For convenience, throughout the following we denote by $\cso$ the set of all complex symmetric operators on $\cH$.
As a subset of $\BH$, $\cso$ admits no linear structure or algebraic structure, although it is closed under the scalar multiplication and the adjoint operation. When $\dim\cH=\infty$, $\cso$ is not closed under both the strong operator topology and the weak operator topology; in fact, $\cso$ is dense in $\BH$ under these two topologies (see \cite[Theorem 3]{GarPooreJFA}).

In \cite{Gar09}, Garcia and Wogen raised the norm closure problem for complex symmetric operators which asked whether or not $\cso$ is norm closed. The author, Li and Ji \cite{ZhuLiJi} gave a negative answer to the above question by proving that the Kakutani shift lies in $\overline{\cso}\setminus\cso$. Almost immediately, using the unilateral shift, Garcia and Poore \cite{GarPoore13} constructed a completely different counterexample. Subsequently Garcia and Poore \cite{GarPooreJFA} constructed a large class of weighted shifts belonging to $\overline{\cso}\setminus\cso$. In \cite{GuoJiZhu}, Guo, Ji and the author provided a $C^*$-algebra approach to the study of complex symmetric operators and gave concrete characterizations for some special classes of operators to belong to $\overline{\cso}$, such as weighted shifts and essentially normal operators. Thereafter the author \cite{ZhuMA} gave a characterization of $\overline{\cso}$; in particular, it was proved that an operator $T$ lies in $\overline{\cso}$ if and only if $T$ is approximately unitarily equivalent to a complex symmetric operator. By a consequence of Voiculescu's Theorem, this implies that $\overline{\cso}\subset\cso+\KH$, where $\KH$ is the set of all compact operators on $\cH$.
These results suggest a rich structure of $\cso$.

We remark that the study on the approximation of complex symmetric operators has greatly promoted the study of complex symmetric operators and their relatives. Indeed some important progresses in the last several years are linked to and rely on results concerning the approximation of complex symmetric operators. For example, the classification of complex symmetric weighted shifts (\cite{ZhuLi}) relies on a class of fine-rank operators in $\cso$ which were constructed to show that the Kakutani shift is a norm limit of complex symmetric operators (\cite{ZhuLiJi}).
Furthermore, this inspired a decomposition theorem of complex symmetric operators (\cite{GuoZhu}).
When describing which von Neumann algebras and $C^*$-algebras can be singly generated by complex symmetric operators (\cite{ShenZhu,ZhuZhao}), many techniques are employed from approximation theory. Approximation techniques sometimes can be used to solve problems which seems unrelated to approximation. By \cite[Theorem 7.3]{GuoJiZhu}, an essentially normal operator lies in $\overline{\cso}$ precisely when it lies in $\cso$. This result is used to describe when an operator $T$ satisfies that every operator similar to $T$ is complex symmetric. In fact, such $T$ must be an algebraic operator of degree at most $2$ (see \cite[Theorem 1.2]{ZhuZhaoAFA}).

Inspired by the preceding results, the present paper aims to study reducible and irreducible approximation among the class $\cso$.
In view of approximation results on irreducible operators \cite{Halmos68} and reducible operators \cite{Vocu76}, it is natural to ask

\begin{question}\label{Q:main}
Is every complex symmetric operator a norm limit of reducible complex symmetric operators or irreducible complex symmetric operators?
\end{question}

This paper gives a complete answer to the question above. To state our main result, we give several notations.  We write $\rcso$ for the set of reducible ones in $\cso$, and  $\icso$ for the set of irreducible ones in $\cso$. Given a subset $\mathcal{E}$ of $\BH$, we denote by $\overline{\mathcal{E}}$ the norm closure of $\mathcal{E}$.

The main result of this paper is the following theorem.

\begin{theorem}\label{T:main}
\begin{enumerate}
\item[(a)] If $\dim\cH<\infty$, then $\overline{\icso}=\cso$ and $\rcso$ is a nowhere dense closed subset of $\cso$.
\item[(b)] If $\dim\cH=\infty$, then $\overline{\rcso}=\overline{\icso}=\overline{\cso}$.
\end{enumerate}
\end{theorem}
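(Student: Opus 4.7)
For part (a), I plan to fix a conjugation $C$ on $\cH$ and set $S_C=\{T\in\BH:CTC=T^*\}$, so that in an orthonormal basis fixed by $C$ the space $S_C$ consists of complex symmetric matrices; every element of $\cso$ is unitarily equivalent to one in $S_C$, so we may approximate inside $S_C$. First, $\rcso$ is closed: if $T_k\to T$ in $\rcso$ with commuting nontrivial projections $P_k$, then, passing to a subsequence of constant rank $r\in\{1,\dots,\dim\cH-1\}$, $P_k\to P$ by compactness of the rank-$r$ Grassmannian in finite dimensions, and $P$ is a nontrivial projection commuting with $T$. Second, $\icso$ is dense in $S_C$. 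The key input is the nilpotent Jordan block $J$ on $\bC^n$: it is irreducible because the $J$-invariant subspaces are $\mathrm{span}\{e_1,\dots,e_k\}$ and the $J^*$-invariant ones are $\mathrm{span}\{e_{n+1-k},\dots,e_n\}$, so every reducing subspace is trivial; and it is complex symmetric under the flip conjugation $Ce_k=e_{n+1-k}$. Given $A\in S_C$, I consider the affine line $\lambda\mapsto A+\lambda J\subset S_C$. The $\lambda\in\bC$ for which $A+\lambda J$ is reducible form a closed (step one), proper (since $\lambda^{-1}(A+\lambda J)\to J$ as $|\lambda|\to\infty$ is irreducible, by openness of irreducibility in $M_n$), and semi-algebraic set, hence a set with empty interior in $\bC$. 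Choosing $\lambda_k\to 0$ off this set yields $A+\lambda_k J\to A$ with each $A+\lambda_k J$ irreducible. Consequently $\rcso=\cso\setminus\icso$ is closed with dense complement, i.e., nowhere dense in $\cso$.

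For part (b), since $\overline{\rcso},\overline{\icso}\subset\overline{\cso}$ is automatic and both sets are closed, I only need $\cso\subset\overline{\rcso}$ and $\cso\subset\overline{\icso}$; taking closures then upgrades these to the stated equalities.

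For the reducible direction I invoke Voiculescu's theorem. Given $T\in\cso$, pick a normal $N$ on a separable infinite-dimensional Hilbert space $\cK$ with $\sigma(N)=\sigma_e(T)$, say $N=M_z$ on $L^2(\mu)$ for a regular Borel measure $\mu$ with $\mathrm{supp}\,\mu=\sigma_e(T)$. Normal operators are complex symmetric via the spectral-model conjugation $f\mapsto\bar f$, so $T\oplus N$ is a CSO, and $\{0\}\oplus\cK$ is a nontrivial reducing subspace. The standard corollary of Voiculescu supplies unitaries $U_n$ (after the natural identification $\cH\cong\cH\oplus\cK$) with $\|U_n^*(T\oplus N)U_n-T\|\to 0$; each conjugate is unitarily equivalent to the reducible CSO $T\oplus N$ and so lies in $\rcso$.

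The irreducible direction is the genuine difficulty. My plan is to adapt Halmos' Baire-category argument from $\BH$ to the complex Banach space $S_C$. For each triple $(\xi,\eta,\delta)$ drawn from a countable dense family of orthonormal unit pairs together with rationals $\delta>0$, let $E_{\xi,\eta,\delta}\subset S_C$ denote the set of $T\in S_C$ admitting a nontrivial orthogonal projection $P$ commuting with $T$ whose range contains $\xi$ and excludes $\eta$ to within $\delta$. The reducibles in $S_C$ are contained in $\bigcup_{\xi,\eta,\delta}E_{\xi,\eta,\delta}$, so it suffices to prove each $E_{\xi,\eta,\delta}$ is nowhere dense; this I would verify by producing, near any $T\in S_C$ and for any $\eps>0$, a $C$-symmetric rank-two perturbation of the form $\alpha\bigl(\xi\otimes\eta^*+(C\eta)\otimes(C\xi)^*\bigr)$ of norm less than $\eps$ that forces $\|[T+K,P]\|$ bounded below for every projection $P$ meeting the witness conditions. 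The main obstacle lies precisely here: restricting to the $C$-symmetrised rank-two couplings (rather than Halmos' unconstrained rank-one couplings) must still suffice to destroy every approximate reducing subspace, and this is where the complex-symmetric constraint genuinely complicates the argument.
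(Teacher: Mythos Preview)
Your approach to part~(a) works in spirit, though two points need tightening. First, the Jordan block $J$ is $C$-symmetric for the \emph{flip} conjugation, not for the conjugation whose fixed basis makes $S_C$ consist of symmetric matrices; you should simply take $C$ to be the flip from the outset (then $J\in S_C$, and every element of $\cso$ is still unitarily equivalent to something in $S_C$). Second, ``closed, proper, semi-algebraic'' does not force empty interior in $\bC\cong\mathbb R^2$ --- a closed half-plane is semi-algebraic. The fix is that the reducible locus in $M_n(\bC)$ is in fact \emph{real-algebraic}: by Burnside, $T$ is reducible iff the words in $T,T^*$ of length $\le n^2$ fail to span $M_n$, a determinantal condition; a proper real-algebraic subset of $\mathbb R^2$ does have empty interior. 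The paper instead proves density directly by perturbing the real part of $T$ to have simple spectrum and the imaginary part to have all entries nonzero in a $C$-fixed \onb.

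For part~(b) your reducible direction contains a genuine error. The absorption $T\cong_a T\oplus N$ with $N$ normal and $\sigma(N)=\sigma_e(T)$ is \emph{not} a corollary of Voiculescu, even for $T\in\cso$. Take $T=\begin{pmatrix}0&I\\0&0\end{pmatrix}$ on $H\oplus H$ with $\dim H=\infty$: this $T$ is complex symmetric (via the off-diagonal conjugation) with $\sigma_e(T)=\{0\}$, so your $N=0$; but if $T\cong_a T\oplus 0_\cK$ via unitaries $U_n$, then $f_n:=U_n^*(0,v)$ for a unit $v\in\cK$ would be unit vectors with $\|Tf_n\|,\|T^*f_n\|\to 0$, forcing $f_n\to 0$, a contradiction. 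What Voiculescu actually yields is $T\cong_a T\oplus\varrho(\hat T)^{(\infty)}$ for a faithful representation $\varrho$ of $C^*(\hat T)$, and this summand need not be normal. The paper's substance here is to show that $\varrho(\hat T)^{(\infty)}$ is nonetheless approximately unitarily equivalent to a complex symmetric operator, by proving that $\hat T$ inherits a ``g-normal'' property from the complex symmetry of $T$. For the irreducible direction, the gap you flag is real and the paper does not attempt a Baire-category argument at all: it instead writes $T\cong_a T\oplus R\oplus R$ with $R$ complex symmetric and $\sigma(R)=\sigma_{lre}(T)$, perturbs each copy of $R$ to a Cowen--Douglas $B_1(\Omega)$ operator, and then couples the three summands with explicit small off-diagonal blocks into an irreducible $C$-symmetric $3\times 3$ upper-triangular operator.
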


In the case that $\dim\cH=\infty$, we obtain an analogue of Voiculescu's result in the setting of complex symmetric operators.

\begin{theorem}\label{T:AueReduc}
If $T\in\BH$ is complex symmetric, then $T$ is approximately unitarily equivalent to a reducible complex symmetric operator.
\end{theorem}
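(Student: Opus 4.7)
The plan is to realize $T$ as approximately unitarily equivalent to a direct sum $T\oplus N$ for a suitably chosen normal operator $N$, to verify that $T\oplus N$ is both complex symmetric and reducible, and then to transport this conclusion back to $\cH$ via a unitary identification $\cH\oplus\mathcal K\cong\cH$.

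Since $\dim\cH=\infty$, the essential spectrum $\sigma_e(T)$ is nonempty; I would fix any $\lambda\in\sigma_e(T)$, take $\mathcal K$ to be a separable infinite-dimensional Hilbert space, and set $N:=\lambda I_{\mathcal K}$. Because $\lambda$ lies in the spectrum of the image of $T$ in the Calkin algebra, the assignment $p(T)\mapsto p(\lambda)I_{\mathcal K}$ extends to a unital $*$-homomorphism $\pi:C^*(T,1)\to\mathcal B(\mathcal K)$ that vanishes on $C^*(T,1)\cap\KH$. Voiculescu's noncommutative Weyl--von Neumann theorem then produces unitaries witnessing that $T$ is approximately unitarily equivalent to $T\oplus\pi(T)=T\oplus N$.

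Next I would verify complex symmetry and reducibility of $T\oplus N$. Fix a conjugation $C$ on $\cH$ with $CTC=T^*$, and pick any conjugation $D$ on $\mathcal K$. Because $N$ is scalar, $DND=\overline\lambda I_{\mathcal K}=N^*$ automatically, and $C\oplus D$ is a conjugation on $\cH\oplus\mathcal K$ satisfying
\[
(C\oplus D)(T\oplus N)(C\oplus D)=T^*\oplus N^*=(T\oplus N)^*,
\]
so $T\oplus N$ is complex symmetric on $\cH\oplus\mathcal K$. Reducibility is immediate: the projection $0\oplus I_{\mathcal K}$ is nontrivial and reduces $T\oplus N$. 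Fixing any unitary $V:\cH\oplus\mathcal K\to\cH$, the operator $R:=V(T\oplus N)V^{-1}\in\BH$ is unitarily equivalent to $T\oplus N$ and therefore inherits reducibility (via $V(0\oplus I_{\mathcal K})V^{-1}$) and complex symmetry (via the conjugation $V(C\oplus D)V^{-1}$, which remains conjugate-linear, isometric and involutive since $V$ is unitary); and $T$ is approximately unitarily equivalent to $R$.

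The only substantive step is the invocation of Voiculescu's theorem to secure $T\sim_{\mathrm{a}} T\oplus N$; the verifications of complex symmetry and reducibility for $T\oplus N$ are then completely routine. One could equally well take $N$ to be any normal operator with spectrum inside $\sigma_e(T)$ and uniform infinite multiplicity, at the cost of a slightly more elaborate representation $\pi$; the choice $N=\lambda I_{\mathcal K}$ is made purely to minimize bookkeeping.
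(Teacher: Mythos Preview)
Your argument has a genuine gap at the step where you assert that $p(T)\mapsto p(\lambda)I_{\cK}$ extends to a unital $*$-homomorphism $\pi:C^*(T,1)\to\B(\cK)$. Such an extension amounts to a character of $C^*(\hat T)$ sending $\hat T$ to $\lambda$, and the hypothesis $\lambda\in\sigma_e(T)=\sigma(\hat T)$ alone does not furnish one: $C^*(\hat T)$ may admit no characters at all. For a concrete complex symmetric counterexample, let $M=\left(\begin{smallmatrix}0&1\\0&0\end{smallmatrix}\right)$ and $T=M^{(\infty)}$. Then $T\in\cso$, $\sigma_e(T)=\{0\}$, and $C^*(\hat T)\cong C^*(T)\cong M_2(\bC)$ is simple with no characters; in particular $T\not\cong_a T\oplus 0$ (on any infinite-dimensional summand), since $\sigma(T^*T-TT^*)=\{-1,1\}$ while $0$ lies in the spectrum of $(T\oplus 0)^*(T\oplus 0)-(T\oplus 0)(T\oplus 0)^*$. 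Your fallback of taking $N$ normal with $\sigma(N)\subset\sigma_e(T)$ fails for the same reason: every nonzero $*$-representation of $M_2(\bC)$ sends $\hat T$ to a non-normal operator.

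The paper circumvents this obstruction by choosing a \emph{faithful} representation $\varrho$ of $C^*(\hat T)$ and setting $B=\varrho(\hat T)^{(\infty)}$, so that Voiculescu gives $T\cong_a T\oplus B$ automatically; the price is that $B$ need not be complex symmetric. The substantive work is then to show, using $CTC=T^*$ at the Calkin level, that $B$ is g-normal, and to invoke \cite[Theorem 2.1]{GuoJiZhu} (a g-normal operator whose $C^*$-algebra meets the compacts trivially is approximately unitarily equivalent to some $R\in\cso$), giving $T\cong_a T\oplus B\cong_a T\oplus R$.
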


A new concept called ``essentially g-normal" will play a key role in the proof of Theorem \ref{T:AueReduc}. Also the proof employs Voiculescu's noncommutative Weyl-von Neumann Theorem. The proof of Theorem \ref{T:AueReduc} provides a new approach to attack norm-approximation problems related to complex symmetric operators. In fact, the proof of Theorem \ref{T:main} (ii) relies on a corollary of Theorem \ref{T:AueReduc}.  Also,  as an application of Theorem \ref{T:AueReduc}, we shall show in Subsection \ref{S:FinitSpec} that those ones with their spectra consisting of finite components are norm dense in $\cso$ (see Theorem \ref{T:FiniteSpectra}). This is analogous to an approximation result on $\BH$ which states that those operators with their spectra consisting of finite components are norm dense in $\BH$ (\cite{Apostol}).

Given a subset $\mathcal{E}$ of $\BH$, we denote by $\overline{\mathcal{E}}^c$
the set of all operators $A\in\BH$ satisfying: for any $\eps> 0$, there exists $K\in\KH$ with $\|K\|<\eps$ such that $A+K\in \mathcal{E}$.
We call $\overline{\mathcal{E}}^c$ the {\it compact closure} of $\mathcal{E}$. It is clear that $\mathcal{E} \subset \overline{\mathcal{E}}^c\subset(\mathcal{E}+\KH)\cap\overline{\mathcal{E}}$.

By a consequence of Voiculescu's Theorem, if $A,B\in\BH$ and $A\cong_a B$, then there is a sequence $\{U_n\}_{n=1}^\infty$ of unitary operators such
that $U_nA-BU_n\in\KH$ for all $n$ and $U_nA-BU_n\rightarrow 0$ as $n\rightarrow\infty$. By Theorem \ref{T:AueReduc} and \cite[Theorem 3]{ZhuMA}, this shows that
$$\overline{\cso}^c=\overline{\cso}=\overline{\rcso}=\overline{\rcso}^c\subset\rcso+\KH.$$

By a result of Radjavi and Rosenthal \cite{Radjavi}, every operator is a small compact perturbation
of irreducible operators. In view of Theorem \ref{T:main} (b), it is natural to ask: \textit{Is every complex symmetric operator a compact perturbation or a small compact perturbation of irreducible ones?} More precisely,

\begin{question}\label{Q:smallCmpt}
Does $\cso\subset\overline{\icso}^c$ or $\cso\subset \icso+\KH$ hold?
\end{question}

It was proved in \cite[Proposition 2.4]{ZhuZhao} that all normal operators belong to $\overline{\icso}^c$.
In this paper we shall provide some other evidence for a positive answer to Question \ref{Q:smallCmpt}; see Theorem
\ref{T:smaoCmpt} and Subsection \ref{S:smallCmptPert}.

One may expect an irreducible version of Theorem \ref{T:AueReduc}. The following example excludes the possibility.

\begin{example}
Let $S$ denote the unilateral shift on $l^2(\bN)$. Denote $T=S\oplus S^*$. In view of \cite[Theorem 4.1]{ZhuLi} or \cite[Theorem 2.8]{GuoJiZhu}, $T$ is complex symmetric.
We claim that $T$ is not approximately unitarily equivalent to any irreducible complex symmetric operator.
In fact, if $A$ is irreducible, complex symmetric and $T\cong_a A$, then $A$ is essentially normal and, by \cite[Proposition 4.27]{herr89}, $T\cong A$, where $\cong$ denotes unitary equivalence. This is absurd, since $A$ is irreducible and $T$ is reducible.
\end{example}

The proof of Theorem \ref{T:main} (a) will be given in Section \ref{S:finit}. The proofs of Theorem \ref{T:main} (b) and  Theorem \ref{T:AueReduc} will be given in Section \ref{S:infi}.


\section{Finite-dimensional case}\label{S:finit}

In this section we consider Question \ref{Q:main} in the finite-dimensional case. The main result of this section is the following theorem.

\begin{theorem}\label{T:smaoCmpt}
Let $T\in\cso$ act on a separable, complex Hilbert space $\cH$. If either $T+T^*$ or $T-T^*$ is a diagonal operator, then, given $\eps>0$, there exists $K\in\KH$ with $\|K\|<\eps$ such that $T+K\in\icso$.
\end{theorem}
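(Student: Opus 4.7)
The plan is to first reduce to the case where $D := T+T^*$ is diagonal: if instead $T-T^*$ is diagonal, then $iT$ is still $C$-symmetric and $(iT)+(iT)^* = i(T-T^*)$ is diagonal, so a compact $K'$ with $\|K'\|<\eps$ and $iT+K' \in \icso$ yields $K := -iK'$ with $T+K = -i(iT+K') \in \icso$ of the same norm. Assuming $D$ is diagonal, note that $CDC = D$ forces each eigenspace of $D$ to be $C$-invariant and $C$ to restrict there to a conjugation; choosing a $C$-fixed $\onb$ inside each eigenspace (every conjugation on a separable Hilbert space has an $\onb$ of fixed vectors) gives an $\onb$ $\{e_n\}$ of $\cH$ with $De_n = d_n e_n$ ($d_n \in \mathbb{R}$) and $Ce_n = e_n$. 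In this basis $C$ is the canonical conjugation $J$ and the matrix of $T$ is symmetric; writing $T = A + iB$ with $A = \tfrac12(T+T^*)$ and $B = \tfrac{1}{2i}(T-T^*)$ self-adjoint, one finds that $A = \textup{diag}(a_n)$ is real diagonal and $B = [B_{ij}]$ is a real symmetric matrix in this basis.

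Next I would construct a perturbation $K = L_A + iL_B$, where $L_A = \textup{diag}(l_n)$ is a real diagonal compact operator and $L_B$ is a real symmetric compact operator in $\{e_n\}$. Both $L_A$ and $L_B$ are self-adjoint with real $\{e_n\}$-entries, which yields $J(T+K)J = (T+K)^*$ directly, so $T+K$ is automatically $J$-symmetric. The goal is to ensure $\|K\|<\eps$ together with: (i) $A + L_A$ has simple spectrum, i.e.\ the scalars $a_n + l_n$ are pairwise distinct; and (ii) every off-diagonal entry of $B + L_B$ is nonzero. Property (i) would be arranged by choosing $l_n$ inductively in the open interval $(-\eps/2^{n+2},\eps/2^{n+2})$ while avoiding the finite forbidden set $\{a_m + l_m - a_n : m < n\}$. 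Property (ii) would be arranged by first fixing a real symmetric compact $L_B^{(0)}$ with $\|L_B^{(0)}\| < \eps/2$ and every off-diagonal entry nonzero (say $(L_B^{(0)})_{ij} = \eps\cdot 2^{-(i+j+3)}$ for $i \neq j$ and zero on the diagonal), and then setting $L_B = t L_B^{(0)}$ for any $t \in (0, 1]$ outside the countable set $\{-B_{ij}/(L_B^{(0)})_{ij} : i \neq j,\ B_{ij}\ne 0\}$.

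Irreducibility of $T+K = (A+L_A) + i(B+L_B)$ then follows: any projection $P$ reducing $T+K$ must commute with both $A+L_A$ and $B+L_B$. By (i) the commutant $\{A+L_A\}'$ equals the diagonal operators in $\{e_n\}$, so $Pe_n = \chi_S(n)\, e_n$ for some $S \subset \bN$. Then $P(B+L_B) = (B+L_B)P$ gives $(B+L_B)_{ij}(\chi_S(i) - \chi_S(j)) = 0$ for all $i,j$, and (ii) forces $\chi_S$ to be constant, whence $P \in \{0, I\}$. The most delicate ingredient is the initial passage to the $\{e_n\}$-basis exhibiting the ``real diagonal plus $i$ times real symmetric'' decomposition of $T$, which couples the diagonalizability of $D$ with the standard structure of conjugations on separable Hilbert spaces; once this is in place, the remainder is a routine two-step perturbation within the $J$-symmetric class.
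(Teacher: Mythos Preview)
Your proof is correct and follows essentially the same route as the paper: pass to a $C$-fixed \onb\ adapted to the eigenspaces of the diagonal self-adjoint part (so that $T=A+\mathrm{i}B$ with $A$ real diagonal and $B$ real symmetric in that basis), perturb $A$ within real diagonals to force simple spectrum, perturb $B$ within real symmetrics to make all off-diagonal entries nonzero, and then run the standard ``diagonal projection with constant entries'' irreducibility argument. The only differences are cosmetic---your use of a scaling parameter $t$ to avoid a countable forbidden set versus the paper's direct $\ell^1$-small choice of new entries---and do not affect the substance.
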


The proof of Theorem \ref{T:main} (a) is an immediate corollary of the preceding result.

\begin{proof}[of Theorem \ref{T:main} (a)]
Since $\dim\cH<\infty$, it is easy to verify that $\cso$ and the set of reducible operators on $\cH$ are two closed subsets of $\BH$.
Thus $\overline{\rcso}=\rcso$ and $\icso$ is an open subset of $\cso$.

On the other hand, by Theorem \ref{T:smaoCmpt}, $\cso\subset\overline{\icso}\subset\overline{\cso}=\cso$, that is, $\cso=\overline{\icso}$. Thus $\rcso$ is nowhere dense in $\cso$.
\end{proof}

As usual, if $T\in\BH$, we denote by $\ker T$ the kernel of $T$, and by $\ran~ T$ the range of $T$. The spectrum of $T$ is denoted as $\sigma(T)$.
Given a nonempty set $E$ of $\cH$, we let $\vee E$ denote the closed linear span of $E$. For $e,f\in\cH$, let $e\otimes f$ denote the rank-one operator $(e\otimes f)(x)=\la x,f\ra e$ for $x\in\cH$.

Now we are going to give the proof of Theorem \ref{T:smaoCmpt}.

\begin{proof}[of Theorem \ref{T:smaoCmpt}]
We only prove the result in the case that $\dim\cH=\infty$. The proof for the finite-dimensional case is similar.

Assume that $T=A+\textup{i}B$, where $A,B\in\BH$ are self-adjoint. Without loss of generality we assume that $A$ is diagonal and
$\sigma(A)=\{\lambda_i: i=1,2,3,\cdots\}$.
For each $i\geq 1$, denote $\cH_i=\ker(A-\lambda_i)$. Then each $\cH_i$ reduces $A$ and $\cH=\oplus_{i\geq 1}\cH_i$.

Since $T$ is complex symmetric, we may assume that $C$ is a conjugation on $\cH$ and $CTC=T^*$. One can easily verify that $CAC=A$ and $CBC=B$; in addition, we have $C(\cH_i)=\cH_i$ for $i\geq 1$. Hence each $\cH_i$ reduces $C$. For each $i$, denote $C_i=C|_{\cH_i}$. So $C_i$ is a conjugation on $\cH_i, i\geq 1$.

By \cite[Lemma 2.11]{Gar14}, we can choose an orthonormal basis (\onb, for short) $\{e_{i,j}\}_{j\in\Lambda_i}$ of $\cH_i$ such that $C_ie_{i,j}=e_{i,j}$ for all $j\in\Lambda_i$.
For $i_1,i_2\geq 1$, $j_1\in\Lambda_{i_1}$ and $j_2\in\Lambda_{i_2}$, denote $\mu^{(i_1,j_1)}_{i_2,j_2}=\la Be_{i_1,j_1}, e_{i_2,j_2}\ra$.
Since $CBC=B$, it follows that
\begin{align*}
\mu^{(i_1,j_1)}_{i_2,j_2}&=\la Be_{i_1,j_1}, e_{i_2,j_2}\ra= \la e_{i_1,j_1}, Be_{i_2,j_2}\ra\\
&=\la e_{i_1,j_1}, CBCe_{i_2,j_2}\ra=\la BCe_{i_2,j_2},Ce_{i_1,j_1}\ra\\
&=\la Be_{i_2,j_2},e_{i_1,j_1}\ra=\mu^{(i_2,j_2)}_{i_1,j_1}.
\end{align*} Note that $B$ is self-adjoint. It follows that
$$\mu^{(i_2,j_2)}_{i_1,j_1}=\mu^{(i_1,j_1)}_{i_2,j_2}=\overline{\mu^{(i_2,j_2)}_{i_1,j_1}}$$ for $i_1,i_2\geq 1$, $j_1\in\Lambda_{i_1}$ and $j_2\in\Lambda_{i_2}$.
Also one can check that $Ae_{i,j}=\lambda_ie_{i,j}$ for $i\geq 1$ and $j\in\Lambda_i$.

Fix an $\eps>0$. We can choose pairwise distinct real numbers $\{a_{i,j}: i\geq 1, j\in\Lambda_i\}$ such that
\[\sup_{i\geq 1}\sup_{j\in\Lambda_i}|a_{i,j}-\lambda_i|<\eps/2.\]
On the other hand, we can also choose nonzero real numbers $\{ b^{(i_1,j_1)}_{i_2,j_2}: i_1,i_2\geq 1, j_1\in\Lambda_{i_1},j_2\in\Lambda_{i_2} \}$ such that
$b^{(i_1,j_1)}_{i_2,j_2}=b^{(i_2,j_2)}_{i_1,j_1}$ for   $i_1,i_2\geq 1$, $j_1\in\Lambda_{i_1}$ and $j_2\in\Lambda_{i_2}$ and
\[\sum_{i_1\geq 1}\sum_{i_2\geq 1}\sum_{j_1\in\Lambda_{i_1}} \sum_{j_2\in\Lambda_{i_2}}  \big|b^{(i_1,j_1)}_{i_2,j_2}-\mu^{(i_1,j_1)}_{i_2,j_2}  \big|<\eps/2. \]
It follows that the operator $K$ defined as $K=K_1+\textrm{i}K_2$, where \begin{align*}
 K_1&=\sum_{i\geq 1}\sum_{j\in\Lambda_i}(a_{i,j}-\lambda_i) e_{i,j}\otimes e_{i,j},\\
 K_2&=\sum_{i_1\geq 1}\sum_{i_2\geq 1}\sum_{j_1\in\Lambda_{i_1}} \sum_{j_2\in\Lambda_{i_2}} \big(b^{(i_1,j_1)}_{i_2,j_2}-\mu^{(i_1,j_1)}_{i_2,j_2}\big)  e_{i_2,j_2}\otimes e_{i_1,j_1},
\end{align*}  is a compact operator on $\cH$ with $\|K\|<\eps$. One can check that $CKC=K^*$. Thus $C(K+T)C=(K+T)^*$, that is, $K+T$ is complex symmetric.
So it remains to check that $T+K$ is irreducible.

Denote $A_1=A+K_1$ and $B_1=B+K_2$. Thus $A_1,B_1$ are self-adjoint and $T+K=A_1+\textrm{i}B_1$.
Assume that $P$ is a projection commuting with $T+K$. It follows that $PA_1=A_1P$ and $PB_1=B_1P$.
Note that $A_1$ is diagonal satisfying  $\dim\ker(A_1-a_{i,j})=1$ for all $i,j$ and
$$\bigvee_{i\geq 1}\bigvee_{j\in\Lambda_i}\ker(A_1-a_{i,j})=\cH.$$
We deduce that $P=\sum_{i\geq 1}\sum_{j\in\Lambda_i} r_{i,j}e_{i,j}\otimes e_{i,j}$, where each $r_{i,j}$ is either $0$ or $1$ and the series converges  in the strong operator topology. On the other hand, one can see from $PB_1=B_1P$  that
\[r_{i_2,j_2}b^{(i_1,j_1)}_{i_2,j_2}=b^{(i_1,j_1)}_{i_2,j_2}r_{i_1,j_1}\]for $i_1,i_2\geq 1$, $j_1\in\Lambda_{i_1}$ and $j_2\in\Lambda_{i_2}$.
Since all $b^{(i_1,j_1)}_{i_2,j_2}$'s are nonzero, we have $r_{i_1,j_1}=r_{i_2,j_2}$.
It follows readily that either $P=0$ or $P=I$.
This shows that $T+K$ is irreducible.
\end{proof}

\begin{remark}
The proof of Theorem \ref{T:smaoCmpt} is inspired by Radjavi and Rosenthal \cite{Radjavi}.
\end{remark}


\section{Infinite-dimensional case}\label{S:infi}

Throughout this section, we assume that $\dim\cH=\infty$.

\subsection{Reducible approximation}

The aim of this subsection is to prove Theorem \ref{T:AueReduc}. We first make some preparation. We begin with a useful concept.

\begin{definition}
An operator $T\in\BH$ is said to be {\it g-normal} if it satisfies
\[\|p(T^*,T)\|=\|\widetilde{p}(T,T^*)\|\]
for any polynomial $p(z,w)$ in two free variables $z,w$. Here $\widetilde{p}(z,w)$ is
obtained from $p(z,w)$ by conjugating each coefficient.
\end{definition}

The notion ``g-normal" was first introduced in \cite{GuoJiZhu}. Complex symmetric operators are always g-normal. In fact, if $A\in\BH$ is $C$-symmetric, then, for each polynomial $p(z,w)$ in two free variables,
it is easy to check that
\[C(p(A^*,A))C=\widetilde{p}(A,A^*).\]  Since $C$ is isometric, it follows that
$\|p(A^*,A)\|=\|\widetilde{p}(A,A^*)\|$.
This shows that each complex symmetric operator is g-normal.

We denote by $\KH$ the ideal of all compact operators acting on $\KH$, and by
$\pi:\BH\rightarrow \AH = \BH/\KH$ the canonical projection of $\BH$ onto the
(quotient) Calkin algebra. The image $\pi(T)=T+\KH$ of  $T$ in $\AH$
will also be denoted by $\hat{T}$.

%

%

An operator $A\in\BH$ is called a {\it semi-Fredholm} operator, if $\ran A$  is closed
and either $\nul A$ or $\nul A^*$ is finite, where $\nul A:=\dim\ker A$ and $\nul
A^*:=\dim\ker A^*$; in this case, $\ind A:=\nul A-\nul A^*$ is called the {\it
index} of $A$. In particular, if $-\infty<\ind A<\infty$, then $A$ is called a {\it
Fredholm operator}. The {\it Wolf spectrum $\sigma_{lre}(A)$} and the {\it essential spectrum $\sigma_{e}(A)$} of $A$ are defined respectively as
\[\sigma_{lre}(A):=\{\lambda \in \bC: A-\lambda \textup{ is not semi-Fredholm}
\} \] and
\[\sigma_{e}(A):=\{\lambda \in \bC: A-\lambda \textup{ is not Fredholm}
\} \]

Given a unital $C^*$-algebra $\cA$ and $a\in\cA$, we let $C^*(a)$ denote the
$C^*$-subalgebra of $\cA$ generated by $a$ and the identity.

Now we are going to give the proof of Theorem \ref{T:AueReduc}.

\begin{proof}[of Theorem \ref{T:AueReduc}]
Let $T\in\BH$ and $C$ be a conjugation on $\cH$ such that $CTC=T^*$.

Denote $\cA=C^*(\hat{T})$. Choose a unital, faithful $*$-representation $\varrho$ of $C^*(\hat{T})$ on $\cH_\varrho$. Denote $A=\varrho(\hat{T})$ and $B=A^{(\infty)}$.
By \cite[Proposition 4.21 (ii)]{herr89}, we have
\begin{equation}\label{Eq:app}
T\cong_a T\oplus B .
\end{equation}

{\it Claim.}  $B$  is g-normal.

Fix a polynomial $p(z,w)$ in two free variables $z,w$. Then
\begin{align*}
\|\widetilde{p}(\hat{T},\hat{T}^*)\|&=\inf\{\|\widetilde{p}(T,T^*)+K\|:K\in\KH\}\\
&=\inf\{ \|Cp(T^*,T)C+K\|: K\in\KH\}\\
&=\inf\{\|p(T^*,T)+CKC\|: K\in\KH\}\\
&=\inf\{\|p(T^*,T)+K\|: K\in\KH\}=\|p(\hat{T}^*,\hat{T})\|.
\end{align*}
The last but one equality follows from the fact that $\KH=\{CKC: K\in\KH\}$. Since $\varrho$ is faithful, it is easy to check that $\|\widetilde{p}(A,A^*)\|=\|p(A^*,A)\|$. Since $B=A^{(\infty)}$, we obtain $\|\widetilde{p}(B,B^*)\|=\|p(B^*,B)\|$.
This proves the claim.

Note that $C^*(B)$ contains no nonzero compact operator. By \cite[Theorem 2.1]{GuoJiZhu}, $B$ is approximately unitarily equivalent to a complex symmetric operator $R$. In view of (\ref{Eq:app}),  we obtain
$$T\cong_a T\oplus B \cong_a T\oplus R.$$
Obviously, $T\oplus R$ is reducible and complex symmetric. This proves the theorem.

In the remainder, we shall show that
\begin{equation}\label{Eq:spectral}\sigma(R)=\sigma_{e}(R)=\sigma_{lre}(R)=\sigma_{lre}(T).\end{equation}

Since $R$ is complex symmetric, the equality $\sigma_{e}(R)=\sigma_{lre}(R)$ is obvious.
Now assume that $\lambda\in\bC\setminus\sigma_{e}(R)$.
So $R-\lambda$ is a Fredholm operator with $\ind(R-\lambda)=0$. From $B\cong_a R$, it follows that $\ind(B-\lambda)=0$.
Noting that $B=A^{(\infty)}$, we deduce that $A-\lambda$ is a Fredholm operator. If $A-\lambda$ is not invertible, then either $\dim\ker(A-\lambda)>0$ or $\dim\ker(A-\lambda)^*>0$. Without loss of generality, we assume the former holds. Thus $\dim\ker(B-\lambda)=\infty$, contradicting that $B-\lambda$ is a Fredholm operator. So we have shown that $A-\lambda$ is invertible. Furthermore, $R-\lambda$ is invertible. So $\lambda\notin\sigma(R)$. Thus $\sigma(R)=\sigma_{e}(R)$.

Since $ R\cong_a B$, we obtain $\sigma(R)=\sigma(B)$. From $B=A^{(\infty)}$, we have $\sigma(B)=\sigma(A)$. Noting that $\varrho$ is faithful, so
 $\sigma(A)=\sigma(\hat{T})=\sigma_e(T)$. These combining the fact that $T$ is complex symmetric imply
$$\sigma(R)=\sigma(B)=\sigma(A)=\sigma_e(T)=\sigma_{lre}(T).$$
This proves (\ref{Eq:spectral}).
\end{proof}

\begin{corollary}\label{C:ApproReduc}
If $T\in\BH$ is complex symmetric, then there exists a complex symmetric operator $R$ satisfying
\begin{enumerate}
\item[(i)] $T\cong_a T\oplus R\oplus R$, and
\item[(ii)] $\sigma(R)=\sigma_{lre}(R)=\sigma_{lre}(T)$.
\end{enumerate}
\end{corollary}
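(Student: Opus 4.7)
My plan is to recycle the construction inside the proof of Theorem \ref{T:AueReduc} essentially verbatim, merely observing that the auxiliary operator built there already carries an absorption property that produces the extra summand for free.

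Concretely, with $T$ and $C$ as given, form $\cA = C^*(\hat T)$, pick a unital faithful $*$-representation $\varrho$ on $\cH_\varrho$, and set $A = \varrho(\hat T)$ and $B = A^{(\infty)}$. Exactly as in the proof of Theorem \ref{T:AueReduc}, $B$ is g-normal, contains no nonzero compact in $C^*(B)$, and so by \cite[Theorem 2.1]{GuoJiZhu} is approximately unitarily equivalent to some complex symmetric operator $R$. The same argument given in that proof then yields the spectral identities
\[
\sigma(R)=\sigma_e(R)=\sigma_{lre}(R)=\sigma_{lre}(T),
\]
which is exactly item (ii); no new work is needed here.

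For item (i), the key observation is that because $B$ is an \emph{infinite} direct sum of copies of $A$, we have the unitary equivalence $B \cong B\oplus B$. Combined with $B \cong_a R$, this gives
\[
B \;\cong\; B\oplus B \;\cong_a\; R\oplus R.
\]
On the other hand, from \cite[Proposition 4.21(ii)]{herr89} (used already in the proof of Theorem \ref{T:AueReduc}) one has $T \cong_a T\oplus B$. Chaining these approximate unitary equivalences,
\[
T \;\cong_a\; T\oplus B \;\cong_a\; T\oplus R\oplus R,
\]
which is item (i). Since $R\oplus R$ is visibly reducible and complex symmetric (as a direct sum of two copies of a complex symmetric operator, using the conjugation $C_R\oplus C_R$), the corollary is established.

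There is essentially no obstacle; the only thing worth double-checking is the trivial unitary equivalence $B\cong B\oplus B$, which follows from $A^{(\infty)} \oplus A^{(\infty)} \cong A^{(\infty)}$ on any separable infinite-dimensional space. Everything else is transported from Theorem \ref{T:AueReduc} without modification.
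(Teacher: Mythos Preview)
Your proposal is correct and follows exactly the route the paper intends: the corollary is stated without a separate proof precisely because it drops out of the construction in Theorem \ref{T:AueReduc}, and your key observation $B=A^{(\infty)}\cong B\oplus B$ is the one extra remark needed to upgrade $T\cong_a T\oplus R$ to $T\cong_a T\oplus R\oplus R$. The spectral identities in (ii) are already established verbatim as (\ref{Eq:spectral}) in that proof, so nothing further is required.
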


The proof of Theorem \ref{T:AueReduc} motivates a new notion.

\begin{definition}
Let $\cA$ be a unital $C^*$-algebra. An element $a\in\cA$ is said to be {\it g-normal} if it satisfies
\[\|p(a^*,a)\|=\|\widetilde{p}(a,a^*)\|\]
for any polynomial $p(z,w)$ in two free variables $z,w$. An operator $T\in\BH$ is said to be {\it essentially g-normal} if $\hat{T}$ is a g-normal element of $\AH$.
\end{definition}

From the proof of Theorem \ref{T:AueReduc}, one can see the following.

\begin{corollary}
\begin{enumerate}
\item[(i)] All complex symmetric operators are essentially g-normal.
\item[(ii)] If $T\in\BH$ is essentially g-normal, then $T\cong_a T\oplus R$ with $R$ being complex symmetric and $$\sigma(R)=\sigma_{lre}(R)=\sigma_{lre}(T).$$
\end{enumerate}
\end{corollary}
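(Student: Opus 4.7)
The plan is to observe that both parts fall out of the argument already given for Theorem \ref{T:AueReduc}, so the main task is to isolate exactly which step used complex symmetry and which steps only used g-normality of $\hat{T}$.

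For (i), I would start from the definition: suppose $T\in\BH$ is $C$-symmetric, so $CTC=T^*$. Since $C$ is conjugate-linear and involutive, for any polynomial $p(z,w)$ in two free variables one checks directly (by induction on word length and using $C(\alpha x)=\overline{\alpha}Cx$) that $Cp(T^*,T)C=\widetilde{p}(T,T^*)$. Next I would note that $\KH$ is invariant under the map $K\mapsto CKC$ (since $C$ is an isometric, involutive map of $\cH$ onto itself). Hence, taking the infimum of $\|\widetilde{p}(T,T^*)+K\|$ over $K\in\KH$ and substituting $K=CK'C$, the same calculation as in the proof of Theorem \ref{T:AueReduc} yields
\[
\|\widetilde{p}(\hat{T},\hat{T}^*)\|=\|p(\hat{T}^*,\hat{T})\|,
\]
which is precisely the statement that $\hat{T}$ is a g-normal element of $\AH$.

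For (ii), I would follow the scheme of the proof of Theorem \ref{T:AueReduc} nearly verbatim, with ``essentially g-normal'' replacing ``complex symmetric'' as the input. Set $\cA=C^*(\hat{T})\subset\AH$ and pick a unital faithful $*$-representation $\varrho\colon\cA\to\B(\cH_\varrho)$. Put $A=\varrho(\hat{T})$ and $B=A^{(\infty)}$. Since $\varrho$ is isometric on $\cA$, g-normality of $\hat{T}$ transfers to $A$: $\|p(A^*,A)\|=\|p(\hat{T}^*,\hat{T})\|=\|\widetilde{p}(\hat{T},\hat{T}^*)\|=\|\widetilde{p}(A,A^*)\|$. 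Forming the countable inflation $B=A^{(\infty)}$ preserves these norms, so $B$ is g-normal as well. By Herrero's Proposition 4.21(ii), $T\cong_a T\oplus B$. Since $B=A^{(\infty)}$ acts on an infinite direct sum, $C^*(B)$ contains no nonzero compact operator, so \cite[Theorem 2.1]{GuoJiZhu} applies to the g-normal operator $B$ and produces a complex symmetric operator $R$ with $B\cong_a R$. Concatenating the two equivalences gives $T\cong_a T\oplus R$, which proves the approximate-unitary part.

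Finally, the spectral identities $\sigma(R)=\sigma_{lre}(R)=\sigma_{lre}(T)$ are extracted exactly as in the last portion of the proof of Theorem \ref{T:AueReduc}: complex symmetry of $R$ forces $\sigma_e(R)=\sigma_{lre}(R)$; an index argument using $B=A^{(\infty)}$ (any non-invertibility of $A-\lambda$ at a Fredholm point produces infinite-dimensional kernel or cokernel for $B-\lambda$) yields $\sigma(R)=\sigma_e(R)$; and faithfulness of $\varrho$ combined with $B=A^{(\infty)}$ gives $\sigma(R)=\sigma(B)=\sigma(A)=\sigma(\hat{T})=\sigma_e(T)=\sigma_{lre}(T)$. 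No genuinely new obstacle arises; the only conceptual point worth underlining is that complex symmetry was used in Theorem \ref{T:AueReduc} solely to secure g-normality of $\hat{T}$, so replacing the hypothesis by essential g-normality preserves the entire argument.
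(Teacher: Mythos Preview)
Your approach matches the paper's (which merely says the corollary ``can be seen'' from the proof of Theorem \ref{T:AueReduc}), and part (i) together with the approximate-equivalence portion of (ii) are handled correctly. There is one small gap, however: your claim that complex symmetry of $T$ was used in that proof \emph{solely} to secure g-normality of $\hat T$ is not quite right. In the paper's final line, complex symmetry of $T$ is invoked a second time, to pass from $\sigma_e(T)$ to $\sigma_{lre}(T)$; your chain $\sigma(R)=\cdots=\sigma_e(T)=\sigma_{lre}(T)$ repeats this step without justification under the weaker hypothesis that $T$ is only essentially g-normal.

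The fix is easy. One route: from $T\cong_a T\oplus R$ one gets $\sigma_{lre}(R)\subset\sigma_{lre}(T\oplus R)=\sigma_{lre}(T)$, while $\sigma(R)=\sigma_e(T)\supset\sigma_{lre}(T)$ always; combined with $\sigma(R)=\sigma_{lre}(R)$ (which you already obtained from complex symmetry of $R$ and the index argument), this forces all three sets to coincide. Alternatively, g-normality of $\hat T$ directly yields $\sigma_e(T)=\sigma_{lre}(T)$: for real $\mu$ and $p(z,w)=(z-\overline\lambda)(w-\lambda)-\mu$, the defining identity gives $\|(\hat T-\lambda)^*(\hat T-\lambda)-\mu\|=\|(\hat T-\lambda)(\hat T-\lambda)^*-\mu\|$, so the positive elements $(\hat T-\lambda)^*(\hat T-\lambda)$ and $(\hat T-\lambda)(\hat T-\lambda)^*$ have the same spectral infimum, whence $\hat T-\lambda$ is left invertible iff right invertible.
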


It is natural to explore the relation between g-normality and essential g-normality. We conclude this subsection with two examples.

We let $S$ denote the unilateral shift on $l^2(\bN)$ defined as
\[S(\alpha_1,\alpha_2,\alpha_3,\cdots)= (0,\alpha_1,\alpha_2,\alpha_3,\cdots),\ \ \ \forall \{\alpha_i\}_{i=1}^\infty\in l^2(\bN).\]

\begin{example}
$S$ is essentially g-normal and not g-normal. Noting that $S^*S-SS^*\in\KH$, it is easy to verify that
$p(\hat{S}^*,\hat{S})^*=\widetilde{p}(\hat{S},\hat{S}^*).$ Thus $S$ is essentially g-normal.
On the other hand, note that
$$I-S^*S=0,\ \ \ I-SS^*\ne 0.$$ So $S$ is not g-normal. However, by the B-D-F Theorem (\cite{BDF}), $S$ is unitarily equivalent to a compact perturbation of
$S^{(2)}\oplus S^*$. Note that $S^{(2)}\oplus S^*$ is g-normal. Thus $S$ is a compact perturbation of g-normal operators.
\end{example}

\begin{example}
Denote $T=S^{(\infty)}\oplus S^*$.
Note that $I-T^*T$ is compact and $I-TT^*$ is not compact. Thus $T$ is not essentially g-normal.
On the other hand,  if we denote $A=S\oplus S^*$, then it is easy to see $\|p(T^*,T)\|=\|p(A^*,A)\|$ for any polynomial $p(\cdot,\cdot)$ in two free variables. Since $A$ is clearly complex symmetric and hence g-normal, we deduce that $T$ is g-normal.
\end{example}

So far, we do not know any example of essentially g-normal operator which can not be written as ``g-normal plus compact".

\begin{question}
Is every essentially g-normal operator of the form ``g-normal plus compact"? If not, which essentially g-normal operators are of the form ``g-normal plus compact"?
\end{question}


\subsection{Spectra of complex symmetric operators}\label{S:FinitSpec}

The aim of this subsection is to give an application of Theorem \ref{T:AueReduc}. In general, the spectrum of a complex symmetric operator may
have infinitely many components, since normal operators are always complex symmetric and their spectra may be any nonempty compact set. Also we can construct non-normal examples. In fact, given $A\in\BH$ and a conjugation $C$ on $\cH$, the  operator
$T=A\oplus CA^*C$ acting on $\cH\oplus\cH$ is complex symmetric with respect to the following conjugation
\[\begin{bmatrix}
0&C\\
C&0
\end{bmatrix}\begin{matrix}
  \cH\\ \cH
\end{matrix}.\] One can check that $\sigma(T)=\sigma(A)$. If $A$ is non-normal, then so is $T$.

The following result shows that those ones with their spectra consisting of finite components are norm dense in $\cso$.

\begin{theorem}\label{T:FiniteSpectra}
Given $T\in\cso$ and $\eps>0$, there exists $K\in\BH$ with $\|K\|<\eps$ such that $T+K\in\cso$ and $\sigma(T+K)$ consists of finite components.
\end{theorem}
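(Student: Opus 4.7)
My plan is to use Corollary \ref{C:ApproReduc} to approximate $T$ (up to a small compact perturbation) by an operator of the form $V^{*}(T\oplus R\oplus R)V$, and then to modify one summand, replacing a copy of $R$ by a normal complex symmetric operator with prescribed finite-component spectrum.

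First I would cover the compact set $\sigma(T)$ by finitely many closed disks $D_{1},\ldots,D_{n}\subset\bC$ of radius less than $\eps/3$, and set $F=\bigcup_{i=1}^{n}D_{i}$. Then $F$ is compact, contains $\sigma(T)$, and has at most $n$ connected components. By Corollary \ref{C:ApproReduc}, there is a complex symmetric $R$ with $\sigma(R)=\sigma_{lre}(T)$ and $T\cong_{a}T\oplus R\oplus R$. Voiculescu's theorem then yields a compact $K_{0}$ with $\|K_{0}\|<\eps/3$ and a unitary $V$ such that $T+K_{0}=V^{*}(T\oplus R\oplus R)V$.

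Next I would take a normal complex symmetric operator $N$ on the same Hilbert space as one copy of $R$, with $\sigma(N)=F$; such an $N$ exists because any normal operator is complex symmetric via the spectral theorem and the natural conjugation $f\mapsto\overline{f}$ on $L^{2}(F,\mu)$ for a suitable infinite-multiplicity measure $\mu$. Setting $T_{\eps}:=V^{*}(T\oplus R\oplus N)V$, the operator $T_{\eps}$ is complex symmetric, its spectrum equals $\sigma(T)\cup\sigma(R)\cup F=F$ (so at most $n$ components), and the triangle inequality gives $\|T_{\eps}-T\|\leq\|R-N\|+\|K_{0}\|<\|R-N\|+\eps/3$.

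The main obstacle is to realize the substitution with $\|R-N\|<2\eps/3$. A generic complex symmetric operator need not lie close in norm to any normal operator (for instance if $R$ contains a shift-like summand), so the replacement cannot be carried out by a naive approximation. The intended route is to exploit the specific infinite-multiplicity form $R\cong_{a}A^{(\infty)}$ with $A=\varrho(\hat{T})$ from the proof of Theorem \ref{T:AueReduc}: this form allows one to absorb, via Voiculescu-style arguments, any normal operator whose spectrum sits inside $\sigma_{lre}(T)$, and then an Apostol-type spectral-surgery step enlarges $\sigma(R)=\sigma_{lre}(T)$ to the finite-component set $F$ through a small non-compact perturbation that is compatible with the conjugation $C$. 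This symmetry-respecting spectral surgery is where the real technical work lies; indeed one could alternatively bypass the direct-sum reduction and perform the merging of nearby components of $\sigma(T)$ directly on $T$, iteratively adding small $C$-symmetric perturbations until only finitely many components survive.
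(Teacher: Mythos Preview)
Your proposal has a genuine gap at precisely the point you identify. You need a normal $N$ with $\sigma(N)=F$ and $\|R-N\|<2\eps/3$, but this is not achievable in general: $R$ is an arbitrary complex symmetric operator with $\sigma(R)=\sigma_{lre}(R)=\sigma_{lre}(T)$, and such operators need not lie within any prescribed distance of a normal operator (your own shift example shows this). The remedies you sketch do not close the gap. Absorbing a normal operator with spectrum inside $\sigma_{lre}(T)$ via Voiculescu does not help, because what you need is to \emph{enlarge} the spectrum to $F$, not to absorb something inside it; and ``Apostol-type spectral surgery compatible with the conjugation $C$'' is a description of the problem, not a solution. The alternative of merging components of $\sigma(T)$ by iterated small $C$-symmetric perturbations is equally unjustified: you would need to show that nearby components can always be joined by a perturbation that is simultaneously small, $C$-symmetric, and does not create new components elsewhere, and nothing in your outline addresses this.

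The paper's proof avoids the normal-replacement idea entirely. It perturbs $R$ to a \emph{non-normal} operator $A$ with $\|A-R\|<\eps$ and $\sigma(A)=\sigma_{lre}(A)$ equal to a finite union of closed disks covering $\sigma_{lre}(T)+B(0,\eps/2)$; the construction of $A$ is concrete (an upper-triangular matrix with diagonal blocks $\lambda_i+N_i$ and a corner block $S$ carrying the original spectrum). Complex symmetry is then obtained not by making $A$ itself $C$-symmetric but by replacing the \emph{two} copies of $R$ asymmetrically, setting $W=T\oplus A\oplus CA^{*}C$. Because $\sigma(A)$ only covers a neighbourhood of $\sigma_{lre}(T)$ rather than all of $\sigma(T)$, the paper must then argue separately that $\sigma(W)$ has finitely many components; this is done by showing that any putative infinite family of components would eventually have to miss $\sigma_{lre}(W)$ and hence consist of isolated points of $\partial\sigma(T)$ accumulating on $\sigma_{lre}(T)$, a contradiction. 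Your scheme, had it worked, would have made this last step trivial (since $F\supset\sigma(T)$), but the price was the unachievable norm estimate on $R-N$.
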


\begin{proof}
Clearly, we may assume that $\dim\cH=\infty$. By Corollary \ref{C:ApproReduc}, there exists $R\in\cso$ with $ \sigma(R)=\sigma_{lre}(R)=\sigma_{lre}(T)$ such that $T\cong_a T\oplus R\oplus R$.
Assume that $C$ is a conjugation on $\cH$ and $CRC=R^*$. It suffices to prove the conclusion for $T\oplus R\oplus R$.

Fix an $\eps>0$ and set $\delta=3\eps/4$. Note that $\{B(\lambda,\delta)\}_{\lambda\in\sigma(R)}$ is an open cover of $\Gamma:=\{z\in\bC: \textrm{dist}(z,\sigma(R))\leq\eps/2\}$. So there exist finite points $\lambda_1,\lambda_2,\cdots,\lambda_n$ in $\sigma(R)$ such that $\Gamma\subset\cup_{i=1}^n B(\lambda_i,\delta)$. Thus
$$\sigma(R)+B(0,\eps/2)=\sigma_{lre}(R)+B(0,\eps/2)\subset\cup_{i=1}^n B(\lambda_i,\delta).$$

{\it Claim 1.} There exists $A\in\BH$ such that
\begin{enumerate}
\item[(i)] $\|A-R\|<\eps$,  and
\item[(ii)] $\sigma(A)=\sigma_{lre}(A)=\cup_{i=1}^n \overline{B(\lambda_i,\delta)}$ consists of finite components.
\end{enumerate}

By \cite[Lemma 3.2.6]{JiangWang06}, there exists $K_1\in\KH$ with $\|K_1\|<\eps/4$ such that
\[R+K_1=\begin{bmatrix}
  \lambda_1I_1 &&& E_1\\
  &\ddots&&\vdots\\
  &&\lambda_nI_n&E_n\\
  &&& S
\end{bmatrix}\begin{matrix}\cH_1\\ \vdots\\ \cH_n \\  \cH_0 \end{matrix},\] where $\cH=\cH_1\oplus\cdots\oplus\cH_n\oplus\cH_0$, $\dim\cH_i=\infty$, $I_i$ is the identity operator on $\cH_i$ $(0\leq i\leq n)$, $\sigma(S)=\sigma_{lre}(S)=\sigma(R)$  and the entries not showing up are $0$.

For each $i$ with $1\leq i\leq n$, choose a normal operator $N_i\in\B(\cH_i)$ with $\sigma(N_i)=\overline{B(0,\delta)}$ and without eigenvalues. Set \[K_2=\begin{bmatrix}
  N_1 &&&  \\
  &\ddots&& \\
  &&N_n& \\
  &&& 0
\end{bmatrix}\begin{matrix}\cH_1\\ \vdots\\ \cH_n \\  \cH_0 \end{matrix}.\] Then $\|K_2\|=\delta$ and
\[R+K_1+K_2=\begin{bmatrix}
  \lambda_1+N_1 &&& E_1  \\
  &\ddots&&\vdots \\
  &&\lambda_n+N_n& E_n \\
  &&& S
\end{bmatrix}\begin{matrix}\cH_1\\ \vdots\\ \cH_n \\  \cH_0 \end{matrix}.\]
Set $A=R+K_1+K_2$. Then $\|R-A\|<\eps/4+\delta=\eps$. One can check that
$$\sigma(A)=\left(\cup_{i=1}^n\sigma(\lambda_i+N_i)\right)\cup \sigma(S)=\cup_{i=1}^n\sigma(\lambda_i+N_i)=\cup_{i=1}^n\overline{B(\lambda_i,\delta)}.$$
Clearly, $\sigma(A)=\sigma_{lre}(A)$ consists of finite components.
So $A$ satisfies all requirements and this proves the claim.

Set $W=T\oplus A\oplus CA^*C$. It is obvious that $W$ is complex symmetric and
\begin{align*}
  \|W-T\oplus R \oplus R\|&=\max\{ \|R-A\|, \|R-CA^*C\|\}\\
  &=\max\{ \|R-A\|, \|CR^*C-CA^*C\|\}\\
  &=\|R-A\|\\
  &<\eps.
\end{align*}
Now it remains to check that $\sigma(W)$ consists of finite components.

For a proof by contradiction, we assume that $\sigma(W)$ consists of infinitely many components. Thus we can choose countably many pairwise disjoint components $\{\Gamma_k: k=1,2,\cdots\}$ of $\sigma(W)$.

It is easy to see that $\sigma_{lre}(W)=\sigma_{lre}(A)=\cup_{i=1}^n\overline{B(\lambda_i,\delta)}$ and
$$ \ind(W-\lambda)=\ind(T-\lambda),\ \ \ \ \forall \lambda\in\bC\setminus\sigma_{lre}(W).$$Note that each $\Gamma_k$ is connected and closed.
Then for $i\in\{1,2,\cdots,n\}$ and $k\geq 1$ we have either $\Gamma_k\cap \overline{B(\lambda_i,\delta)}=\emptyset$ or $\overline{B(\lambda_i,\delta) } \subset \Gamma_k$. So there exits $k_0$ such that
\begin{equation}\label{Eq:lre}
  \Gamma_k\cap\sigma_{lre}(W) =\Gamma_k\cap \left(\cup_{i=1}^n\overline{B(\lambda_i,\delta)}\right)=\emptyset
\end{equation} whenever $k\geq k_0$.

For each $k\geq k_0$, choose $z_k\in \Gamma_k$. Then $\ind(z_k-W)=\ind(z_k-T)=0$. Denote by $\Omega_k$ the component of $\bC\setminus\sigma_{lre}(T)$ containing $z_k$. We claim that $\{z_k: k\geq k_0\}\subset\partial\sigma(T)$.
In fact, if $z_k\notin\partial\sigma(T)$, then $\Omega_k\subset\sigma(T)$ and $\partial\Omega_k\subset\sigma_{lre}(T)$, which implies that $\Gamma_k\cap\sigma_{lre}(T)\ne\emptyset$ and $\Gamma_k\cap\sigma_{lre}(W)\ne\emptyset$, a contradiction.
This shows that $\{z_k: k\geq k_0\}\subset\partial\sigma(T)$.

Since $\{z_k: k\geq k_0\}$ is an infinite subset of $\sigma(T)$, we may directly assume that $\{z_k\}$ converges to a point $z_0$ of $\sigma(T)$. So, by \cite[Chapter XI, Theorem 6.8]{Conway90}, $z_0\in\sigma_{lre}(T)$ and there exists $k_1\geq k_0$ such that $z_k\in\sigma_{lre}(T)+B(0,\eps/2)$ for $k\geq k_1$. This shows that $z_k\in\sigma_{lre}(W)$ for all $k\geq k_1$. Hence $\Gamma_k\cap \sigma_{lre}(W)\ne\emptyset$ for $k\geq k_1$. This contradicts (\ref{Eq:lre}) and therefore we conclude the proof.
\end{proof}

The following corollary can be seen from the preceding proof and will be useful later.

\begin{corollary}\label{L:SpectEnlarge}
Let $R\in\BH$ with $\sigma(R)=\sigma_{lre}(R)$. Then, given $\eps>0$, there exists $A\in\BH$ such that
\begin{enumerate}
\item[(i)] $\|A-R\|<\eps$,
\item[(ii)] $\sigma(R)+B(0, \eps/2)\subset\sigma(A)\subset\sigma(R)+B(0, \eps)$, and
\item[(iii)] $\sigma(A)=\sigma_{lre}(A)$ consists of finite components.
\end{enumerate}
\end{corollary}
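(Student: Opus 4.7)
The plan is to read off this corollary from Claim 1 inside the proof of Theorem \ref{T:FiniteSpectra}, essentially recycling the same construction but bookkeeping the extra spectral information that (ii) demands. I will start with the same data: set $\delta=3\eps/4$, choose finitely many $\lambda_1,\dots,\lambda_n\in\sigma(R)$ so that the open balls $B(\lambda_i,\delta)$ already cover the compact set $\Gamma:=\{z:\textrm{dist}(z,\sigma(R))\leq\eps/2\}$. This is possible because $\Gamma$ is compact and $\{B(\lambda,\delta)\}_{\lambda\in\sigma(R)}$ covers $\Gamma$ (since $\delta>\eps/2$).

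Next, using \cite[Lemma 3.2.6]{JiangWang06} applied to $R$ (which is legitimate because $\sigma(R)=\sigma_{lre}(R)$), I would produce a compact perturbation $K_1$ with $\|K_1\|<\eps/4$ such that $R+K_1$ is block upper triangular with diagonal blocks $\lambda_1 I_1,\dots,\lambda_n I_n$ on infinite-dimensional spaces $\cH_1,\dots,\cH_n$ and a remaining block $S$ on $\cH_0$ with $\sigma(S)=\sigma_{lre}(S)=\sigma(R)$. I would then put a diagonal operator $K_2=N_1\oplus\cdots\oplus N_n\oplus 0$ with each $N_i$ normal, eigenvalue-free, and $\sigma(N_i)=\overline{B(0,\delta)}$, so that $\|K_2\|=\delta$. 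Setting $A=R+K_1+K_2$ gives (i) immediately, since $\|A-R\|<\eps/4+\delta=\eps$.

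For (ii) and (iii) I would combine three standard facts. Upper-triangularity yields $\sigma(A)\subset\bigcup_i\sigma(\lambda_i+N_i)\cup\sigma(S)=\bigcup_i\overline{B(\lambda_i,\delta)}$, and since each $\lambda_i\in\sigma(R)$ and $\delta<\eps$, this sits inside $\sigma(R)+B(0,\eps)$. Conversely, each $\lambda_i+N_i$ is normal with no isolated eigenvalues of finite multiplicity, so $\sigma_{lre}(\lambda_i+N_i)=\overline{B(\lambda_i,\delta)}$; using that $\sigma_{lre}$ of a block upper triangular operator contains the $\sigma_{lre}$ of each diagonal block, we obtain $\bigcup_i\overline{B(\lambda_i,\delta)}\subset\sigma_{lre}(A)\subset\sigma(A)$. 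Combined with the cover property, this gives $\sigma(R)+B(0,\eps/2)\subset\Gamma\subset\bigcup_iB(\lambda_i,\delta)\subset\sigma(A)$, establishing (ii). Since all three inclusions together force $\sigma(A)=\sigma_{lre}(A)=\bigcup_i\overline{B(\lambda_i,\delta)}$, which is a finite union of closed discs and therefore has at most $n$ connected components, condition (iii) follows as well.

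The only place where I expect mild technical care is the semi-Fredholm bookkeeping for the block upper triangular operator: one must know that $\sigma_{lre}$ of a $2\times 2$ upper triangular operator matrix contains the $\sigma_{lre}$ of each diagonal entry (so that the normal disc spectra survive into $\sigma_{lre}(A)$). This is standard, but it is the one non-cosmetic ingredient beyond what is already written out in Claim 1, which only asserted the corresponding equality for $\sigma(A)$ and used the eigenvalue-free-normal trick to push points into $\sigma_{lre}$. Everything else is a direct rereading of the earlier construction.
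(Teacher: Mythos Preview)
Your approach is exactly the paper's: the corollary is nothing more than Claim~1 in the proof of Theorem~\ref{T:FiniteSpectra}, together with the extra remark that $\bigcup_i\overline{B(\lambda_i,\delta)}\subset\sigma(R)+B(0,\eps)$ since each $\lambda_i\in\sigma(R)$ and $\delta=3\eps/4<\eps$.

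One caution about the point you flag. The general assertion that $\sigma_{lre}$ of a $2\times 2$ upper-triangular operator matrix contains $\sigma_{lre}$ of \emph{each} diagonal block is \emph{not} true as stated: it can fail for the upper-left block when the lower-right block is semi-Fredholm of infinite index (for a concrete instance take $X$ compact and injective, $Y=S^*\otimes I$, and $E$ an isomorphism of $\ker Y$ onto $\cH_1$; then $T$ is surjective, hence semi-Fredholm, while $0\in\sigma_{lre}(X)$). What \emph{is} standard and suffices here is that $\sigma_{le}$ of the whole contains $\sigma_{le}$ of the upper-left block and $\sigma_{re}$ of the whole contains $\sigma_{re}$ of the lower-right block. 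Since each $\lambda_i+N_i$ is normal without eigenvalues, one has $\sigma_{le}(\lambda_i+N_i)=\sigma_{re}(\lambda_i+N_i)=\overline{B(\lambda_i,\delta)}$; the upper-left position gives $\overline{B(\lambda_i,\delta)}\subset\sigma_{le}(A)$, while for $\sigma_{re}$ one splits into $z\in\sigma_{lre}(S)$ (handled by the lower-right inclusion) and $z\notin\sigma_e(S)$ (where $S-z$ is Fredholm and a routine Calkin-algebra argument recovers $z\in\sigma_{re}(A)$ from $z\in\sigma_{re}(D)$). The paper hides all of this behind ``Clearly,'' so your write-up is already more careful; just do not lean on the false general inclusion.
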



\subsection{Irreducible approximation}
The aim of this subsection is to prove Theorem \ref{T:main} (b).
We need to make some preparation. First we introduce some terminology. The reader is referred to \cite[Chapter 1]{herr89} for more details.

Let $A\in\BH$. The set $\rho_{s-F}(A):=\bC\setminus\sigma_{lre}(A)$ is called the {\it
semi-Fredholm domain} of $A$. For $\lambda\in\rho_{s-F}(A)$, the {\it minimal index} of $A-\lambda$ is defined by
\[\min\cdot\ind(A-\lambda)=\min\{\nul (A-\lambda), \nul(A-\lambda)^*\}.\]
The function $\lambda\mapsto \min\cdot\ind(A-\lambda)$ is constant on every
component of $\rho_{s-F}(A)$ except for an at most denumerable subset
$\rho^s_{s-F}(A)$ without limits in $\rho_{s-F}(A)$. Each
$\lambda\in\rho^s_{s-F}(A)$ is called a {\it singular point} of the semi-Fredholm
domain of $A$, and the set
$\rho^r_{s-F}(A)=\rho_{s-F}(A)\setminus\rho^s_{s-F}(A)$ is the set of {\it
regular points}.

If $\lambda$ is an isolated point of $\sigma(A)$, then there exists an analytic Cauchy domain
$\Omega$ such that $\lambda\in\Omega$ and $[\sigma(A)\setminus\{\lambda\}]\cap\overline{\Omega}=\emptyset$. We
let $E(\lambda; A)$ denote the {\it Riesz idempotent} of $A$ corresponding to $\lambda$, that is,
\[E(\lambda; A)=\frac{1}{2\pi
\textrm{i}}\int_{\Gamma}(z-A)^{-1}\textup{d}z,\] where
$\Gamma=\partial\Omega$ is positively oriented with respect to
$\Omega$ in the sense of complex variable theory. If $\dim\ran E(\lambda;
A)<\infty$, then $\lambda$ is called a {\it normal eigenvalue} of
$A$. The set of all normal eigenvalues of $A$ will be denoted by $\sigma_0(A)$.

\begin{lemma}\label{L:Key}
Let $T\in\cso$.
If $\lambda_0\in\rho_{s-F}(T)\cap\sigma(T)$, then, given $\eps>0$, there exists $K\in\KH$ with $\|K\|<\eps$ such that $T+K$ is complex symmetric and $\lambda_0\notin\sigma(T+K)$.
\end{lemma}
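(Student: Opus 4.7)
My plan is to construct, for any $\eps > 0$, an explicit finite-rank operator $K$ with $\|K\| = \eps$ that is compatible with the conjugation (i.e.\ $CKC=K^*$) and that makes $(T+K)-\lambda_0$ invertible. Since $T$ is $C$-symmetric with $CTC=T^*$, a direct check gives $C(T-\lambda_0)C = (T-\lambda_0)^*$, so the conjugate-linear bijection $C$ maps $M := \ker(T-\lambda_0)$ onto $N := \ker(T-\lambda_0)^*$. Because $\lambda_0 \in \rho_{s-F}(T)$ and $C$ preserves dimension, both $M$ and $N$ are finite dimensional of the same dimension $n$; since $\lambda_0\in\sigma(T)$ we have $n\geq 1$. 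In particular $T-\lambda_0$ is Fredholm of index zero.

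I would then pick an orthonormal basis $\{e_1,\dots,e_n\}$ of $M$; because $C$ is a conjugate-linear isometry with $C(M)=N$, the set $\{Ce_1,\dots,Ce_n\}$ is an orthonormal basis of $N$. I would define the linear isometry $W\colon M\to N$ by $We_i = Ce_i$, extend it by $0$ on $M^\perp$, and put
\[ K \;=\; \eps\, W P_M, \]
where $P_M$ denotes the orthogonal projection onto $M$. Then $K$ has rank $n$, is compact, and $\|K\| = \eps$.

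Two verifications complete the argument. The first is $CKC = K^*$, which forces $C(T+K)C=(T+K)^*$ and hence $T+K\in\cso$. Using the identity $\overline{(Cx,e_i)} = (x, Ce_i)$ (a consequence of $(Ca,Cb)=(b,a)$), both $CKCx$ and $K^*x$ evaluate to $\bar\eps \sum_{i=1}^n (x,Ce_i)\,e_i$. The second is invertibility of $(T-\lambda_0) + K$: with respect to the decompositions $\cH = M\oplus M^\perp$ of the domain and $\cH = N\oplus N^\perp$ of the codomain, this operator becomes block-diagonal,
\[ \begin{pmatrix} \eps W & 0 \\ 0 & (T-\lambda_0)|_{M^\perp} \end{pmatrix}, \]
and both diagonal blocks are Hilbert space isomorphisms (the first by construction, the second because for a Fredholm operator $M^\perp = (\ker(T-\lambda_0))^\perp$ maps isomorphically onto $\ran(T-\lambda_0) = N^\perp$).

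The main subtlety, and essentially the only nontrivial point, is the $C$-compatibility of the rank-$n$ correction. Any linear isomorphism $M\to N$ scaled by $\eps$ would already make $(T-\lambda_0)+K$ invertible, but only the particular choice $We_i = Ce_i$ tied directly to the conjugation produces the identity $CKC = K^*$ that keeps $T+K$ inside $\cso$. Choosing $\eps$ smaller than the given bound then yields the lemma.
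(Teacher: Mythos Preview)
Your proof is correct and is essentially the paper's argument: the operator $K=\eps\sum_{i=1}^n (Ce_i)\otimes e_i$ you build coincides with the paper's choice (the paper uses the scalar $\eps/2$), and the verification that $CKC=K^*$ is identical. The only cosmetic difference is that you prove invertibility of $(T-\lambda_0)+K$ via the block-diagonal decomposition with respect to $M\oplus M^\perp$ and $N\oplus N^\perp$, whereas the paper checks injectivity directly; both are immediate.
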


\begin{proof}
Without loss of generality, we assume that $\lambda_0=0$. Assume that $C$ is a conjugation on $\cH$ such that $CTC=T^*$.

Obviously, $0<\dim\ker T=\dim\ker T^*<\infty$. Denote $n=\dim\ker T$. Assume that $\{e_i\}_{i=1}^n$ is an \onb~ of $\ker T$. Since $CTC=T^*$, it is easy to see that $\{Ce_i\}_{i=1}^n$ is an \onb~ of $\ker T^*$.

Set $K=\frac{\eps}{2}\sum_{i=1}^n (Ce_i)\otimes e_i$. Then it is easy to check that $K\in\KH$, $\|K\|<\eps$ and $CKC=K^*$. Thus $T+K$ is $C$-symmetric. Now it remains to check that $T+K$ is invertible. Since $T$ is a Fredholm operator and $\ind (T+K)=\ind~ T=0$, it suffices to prove that $T+K$ is injective.

Assume that $x\in\cH$ and $(T+K)x=0$. Thus
\[Tx=-\frac{\eps}{2}\sum_{i=1}^n \la x, e_i\ra(Ce_i).\]
Note that the vector on the right side belongs to $\vee\{Ce_i: 1\leq i\leq n\}=\ker T^*=(\ran T)^\bot.$ Thus
$$Tx=0=-\frac{\eps}{2}\sum_{i=1}^n \la x, e_i\ra(Ce_i).$$ This shows that $x\in\ker T$ and $\la x, e_i\ra=0$ for all $1\leq i\leq n$.
Since $\{e_i\}_{i=1}^n$ is an \onb~ of $\ker T$, it follows that $x=0$. So  $T+K$ is injective.
\end{proof}

Note that those invertible operators on $\cH$ constitute an open subset of $\BH$. Then the following corollary is an immediate consequence of Lemma \ref{L:Key}.

\begin{corollary}\label{C:Key}
Let $T\in\BH$ be complex symmetric and $\Gamma$ be a finite subset of $\rho(T)$, where $\rho(T)=\bC\setminus\sigma(T)$.
If $\lambda_0\in\rho_{s-F}(T)\cap\sigma(T)$, then, given $\eps>0$, there exists $K\in\KH$ with $\|K\|<\eps$ such that $T+K$ is complex symmetric and $\Gamma\cup\{\lambda_0\}\subset\rho(T+K)$.
\end{corollary}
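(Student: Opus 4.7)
The plan is to reduce this corollary directly to Lemma \ref{L:Key}, by using the openness of the set of invertible operators to protect the finitely many points of $\Gamma$ under a sufficiently small perturbation. Since $\Gamma$ is a finite subset of $\rho(T)$, every $T-\lambda$ for $\lambda\in\Gamma$ is invertible, so for each such $\lambda$ the quantity $\|(T-\lambda)^{-1}\|^{-1}$ is a positive lower bound on how much we can perturb $T-\lambda$ while keeping it invertible. Taking the minimum over the finite set $\Gamma$ yields a single positive constant
\[\delta:=\min_{\lambda\in\Gamma}\|(T-\lambda)^{-1}\|^{-1}>0,\]
with the property that any $S\in\BH$ satisfying $\|S-T\|<\delta$ has $\Gamma\subset\rho(S)$.

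Given $\eps>0$, I would set $\eps':=\min\{\eps,\delta\}>0$ and apply Lemma \ref{L:Key} to $T$, $\lambda_0$ and $\eps'$. This produces $K\in\KH$ with $\|K\|<\eps'\leq\eps$ such that $T+K$ is complex symmetric and $\lambda_0\notin\sigma(T+K)$. Since $\|K\|<\delta$, by the choice of $\delta$ we also have $\Gamma\subset\rho(T+K)$. Combining the two inclusions yields $\Gamma\cup\{\lambda_0\}\subset\rho(T+K)$, as required.

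There is essentially no obstacle here: the only ingredient beyond Lemma \ref{L:Key} is the elementary fact that the set of invertible operators is open in $\BH$, together with the finiteness of $\Gamma$ which guarantees that the openness constants can be taken uniformly positive. The complex symmetry and the removal of $\lambda_0$ are both inherited verbatim from Lemma \ref{L:Key}.
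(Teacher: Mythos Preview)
Your argument is correct and follows exactly the approach the paper indicates: the paper simply notes that the set of invertible operators is open in $\BH$ and declares the corollary an immediate consequence of Lemma \ref{L:Key}. You have merely spelled out the routine details of that deduction.
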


If $T\in\BH$, we denote $\sigma_B(T):=\sigma(T)\setminus\sigma_0(T)$.

\begin{proposition}\label{P:SpectIncrese}
If $T\in\BH$ is complex symmetric, then, given $\eps>0$, there exists $K\in\KH$ with $\|K\|<\eps$ such that $T+K$ is complex symmetric and
$\sigma_B(T+K)\subset\sigma_{lre}(T)+B(0,\eps)$, where $B(0,\eps)=\{z\in\bC: |z|<\eps\}$.
\end{proposition}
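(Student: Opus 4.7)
Let $\Omega := \bC \setminus (\sigma_{lre}(T) + B(0, \eps))$, a closed set disjoint from $\sigma_{lre}(T)$. For any compact $K$ with $T+K$ complex symmetric one has $\sigma_{lre}(T+K) = \sigma_{lre}(T)$, so every $\lambda \in \sigma(T+K) \cap \Omega$ lies in $\rho_{s-F}(T+K)$ and, by complex symmetry, satisfies $\ind(T+K-\lambda)=0$. Such a $\lambda$ contributes to $\sigma_0(T+K)$ rather than to $\sigma_B(T+K)$ precisely when it is an isolated point of $\sigma(T+K)$. Thus my target reduces to producing a compact $C$-symmetric $K$ with $\|K\| < \eps$ for which $\sigma(T+K) \cap \Omega$ is a (compact, discrete, hence) finite set.

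The first step is to pass to a convenient direct-sum model via Voiculescu. By Corollary \ref{C:ApproReduc} there is $R \in \cso$ with $\sigma(R) = \sigma_{lre}(R) = \sigma_{lre}(T)$ and $T \cong_a T \oplus R \oplus R$. The compact version of Voiculescu's theorem recalled in the introduction then yields a unitary $V$ and a compact $L$ with $\|L\| < \eps/3$ and $VTV^* = T_1 + L$, where $T_1 := T \oplus R \oplus R$ with conjugation $C_1 := C \oplus C_R \oplus C_R$. Pulling the eventual perturbation back by $V$, it suffices to exhibit a compact $C_1$-symmetric $K'$ of norm less than $2\eps/3$ such that $\sigma(T_1 + K') \cap \Omega$ is finite.

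The key step is to build $K'$ as a small finite-rank off-diagonal $C_1$-symmetric coupling between the $T$-block and one $R$-block of $T_1$. Because $\sigma(R \oplus R) = \sigma_{lre}(T)$ is disjoint from $\Omega$, the operator $R - \lambda$ is invertible for every $\lambda \in \Omega$, so $R \oplus R$ acts as a ``resolvent reservoir''. Explicitly, for a finite-rank $g \colon \cH_R \to \cH_T$ and a small real $\alpha$, set $f := C_R g^* C$ and
\[
K' := \alpha\begin{bmatrix} 0 & g & 0 \\ f & 0 & 0 \\ 0 & 0 & 0 \end{bmatrix};
\]
the identity $f = C_R g^* C$ ensures $C_1 K' C_1 = (K')^*$. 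A Schur-complement calculation on the $\cH_R$-block shows that $\lambda \in \Omega$ lies in $\sigma(T_1 + K')$ exactly when the analytic operator pencil $T - \lambda - \alpha^2 g(R-\lambda)^{-1} f$ fails to be invertible at $\lambda$. Compactness of $\sigma(T) \cap \Omega$ inside the open set $\rho_{s-F}(T)$ implies that $\Omega$ meets only finitely many connected components of $\rho_{s-F}(T)$; on each such component $g$ can be chosen, of sufficiently large but finite rank, so that the pencil is invertible at some point, and the analytic Fredholm theorem then renders the non-invertibility set discrete in that component. Compactness of $\sigma(T_1+K')\cap\Omega$ forces finiteness overall.

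The main obstacle is precisely this last construction: arranging the finite-rank $g$ so that the Schur pencil is invertible at one point of each component of $\rho_{s-F}(T)$ meeting $\Omega$, while keeping $\|K'\|$ small. This is where the reservoir $R \oplus R$ supplied by Corollary \ref{C:ApproReduc} plays its decisive role, since through $(R-\lambda)^{-1}$ it offers, uniformly over $\Omega$, the room into which the finite-rank kernel bundles of $T - \lambda$ on the thick components can be routed and destroyed.
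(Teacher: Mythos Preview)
Your route is genuinely different from the paper's, and much more circuitous. The paper never touches Corollary~\ref{C:ApproReduc} or Voiculescu here. It relies instead on the elementary Lemma~\ref{L:Key}: if $\lambda_0\in\rho_{s-F}(T)\cap\sigma(T)$ and $\{e_j\}_{j=1}^n$ is an orthonormal basis of $\ker(T-\lambda_0)$, then the finite-rank $C$-symmetric operator $K=\tfrac{\eps}{2}\sum_{j=1}^n (Ce_j)\otimes e_j$ makes $T+K-\lambda_0$ invertible, because $K$ maps $\ker(T-\lambda_0)$ isomorphically into $\ker(T-\lambda_0)^*=(\ran(T-\lambda_0))^\perp$. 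After observing (as you also do) that only finitely many components of $\rho_{s-F}(T)$ meet $\bC\setminus(\sigma_{lre}(T)+B(0,\eps))$, the paper picks one regular point $\lambda_i$ in each such component and applies Lemma~\ref{L:Key} iteratively (Corollary~\ref{C:Key}) to remove all the $\lambda_i$ from the spectrum with a single compact $C$-symmetric $K$ of norm $<\eps$. The standard min-index argument on each component then gives $\sigma_B(T+K)\subset\sigma_{lre}(T)+B(0,\eps)$.

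Your Schur complement $T-\lambda-\alpha^2 g(R-\lambda)^{-1}f$ is just an indirect way to manufacture a finite-rank perturbation of $T-\lambda_i$ that carries $\ker(T-\lambda_i)$ onto a complement of $\ran(T-\lambda_i)$; Lemma~\ref{L:Key} does exactly this, directly on $\cH$, with the explicit operator above and no reservoir. So the ``main obstacle'' you flag dissolves once the reservoir is dropped. As written, your proposal has a real gap at precisely that point: you have not shown that a single finite-rank $g$ can be chosen so that the induced map $P_i\,g(R-\lambda_i)^{-1}C_R g^*C\big|_{\ker(T-\lambda_i)}$ is an isomorphism onto $\ker(T-\lambda_i)^*$ simultaneously for every relevant $i$ (a finite family of nondegeneracy conditions, plausibly satisfied generically, but not argued). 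A secondary cost of your detour is that pulling back through the Voiculescu unitary $V$ yields a $T+K$ that is complex symmetric with respect to $V^*C_1V$ rather than the original $C$; this is acceptable for the proposition as stated, but weaker than what the paper's direct argument delivers.
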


\begin{proof}
Assume that $C$ is a conjugation on $\cH$ such that $CTC=T^*$, and $\{\Omega_i: i\in\Lambda\}$ are components of $\rho_{s-F}(T)$.

Denote $\Lambda_0=\{i\in\Lambda:\textup{ diameter }\Omega_i\geq\eps\}$.
 Obviously, $\Lambda_0$ is an at most finite set. Without loss of generality, assume that $\Lambda_0$ is not empty.
Choose an $\lambda_i\in\Omega_{i}\cap \rho_{s-F}^r(T)$ for $i\in\Lambda_0$.

{\it Claim.} $\exists K\in\KH$ with $\|K\|<\eps$ such that $T+K$ is complex symmetric and $\lambda_i\in\rho(T+K)$ for all $i\in\Lambda_0$.

For each $i\in \Lambda_0$, denote $m_i=\dim\ker(T-\lambda_i)$.
Set $\Lambda_1=\{i\in\Lambda_0: m_i>0\}$. If $\Lambda_1=\emptyset$, then set $K=0$.
Now we may assume that $\Lambda_1=\{1,2,\cdots,n\}$.

Note that $\lambda_i\in\rho(T)$ for $i\in\Lambda_0\setminus\Lambda_1$.
By Corollary \ref{C:Key}, we can find $K_1\in\KH$ with $\|K_1\|<\eps/2$ such that $T+K_1$ is complex symmetric and
$\{\lambda_i: i\in \Lambda_0\setminus\Lambda_1\}\cup\{\lambda_1\}\subset\rho(T+K_1)$.
Now applying the same argument to $T+K_1$, we can find $K_2\in\KH$ with $\|K_1\|<\eps/2^2$ such that $T+K_1+K_2$ is complex symmetric and
$\{\lambda_i: i\in \Lambda_0\setminus\Lambda_1\}\cup\{\lambda_1,\lambda_2\}\subset\rho(T+K_1+K_2)$.
After finitely many steps, we can find $K_3,\cdots, K_n\in\KH$ with $\|K_i\|<\eps/2^i$ such that $T+\sum_{i=1}^nK_i$ is complex symmetric and
$$\{\lambda_i: i\in\Lambda_0\}=\{\lambda_i: i\in \Lambda_0\setminus\Lambda_1\}\cup\{\lambda_i: 1\leq i\leq n\}\subset\rho(T+\sum_{i=1}^nK_i).$$
Set $K=\sum_{i=1}^n K_i$. Then $K$ satisfies the requirements of Claim.

Now we shall prove that $\sigma_B(T+K)\subset\sigma_{lre}(T)+B(0,\eps)$. Choose a $\lambda\in\bC\setminus[\sigma_{lre}(T)+B(0,\eps)]$.
It suffices to prove that $\lambda\notin\sigma_B(T+K)$.

Since  $\lambda\notin\sigma_{lre}(T)+B(0,\eps)$, it is obvious that $\lambda\in\rho_{s-F}(T)$ and $\dist(\lambda,\sigma_{lre}(T))\geq\eps$.
So there exists unique $i_0\in\Lambda$ such that $\lambda\in\Omega_{i_0}$. Note that $\partial\Omega_{i_0}\subset\sigma_{lre}(T)$. It follows that diameter $\Omega_{i_0}\geq\eps$, so $i_0\in\Lambda_0$.
Since $T+K-\lambda_{i_0}$ is invertible, it follows that $\min\cdot\ind(T+K-z)=0$ on $\Omega_{i_0}$ except for an at most countable set $\Gamma$ which has no limit points in $\Omega_{i_0}$. Noting that $T$ is complex symmetric (and hence bi-quasitriangular), this equals to say $\Omega_{i_0}\setminus\Gamma\subset\rho(T+K)$ and $\Gamma\subset\sigma_0(T+K)$. So we have either $\lambda\in\rho(T+K)$ or $\lambda\in\sigma_0(T+K)$, each of which implies $\lambda\notin\sigma_B(T+K)$.
\end{proof}

Cowen and Douglas \cite{CowenDoug} introduced an important class of operators related
to complex geometry now known as Cowen-Douglas operators.
Let $\Omega$ be a connected open subset of $\bC$ and $n$ be a positive integer.
An operator $A\in\BH$ is said to be a Cowen-Douglas
operator, denoted by $A\in B_n(\Omega)$, if $A$ satisfies
\begin{enumerate}
\item[(a)] $\Omega\subset\sigma(A)$,
\item[(b)] $\ran(A -z) = \cH$  for $z\in\Omega$,
\item[(c)] $\vee_{z\in\Omega}\ker(A-z)=\cH$, and
\item[(d)] $\dim\ker(A-z)= n$ for $z\in\Omega$.
\end{enumerate} If $A\in B_n(\Omega)$, then it is well known that $\vee_{k\geq 1}\ker(A-z)^k=\cH$ for all $z\in\Omega$.
It is easy to check that if $A\in B_1(\Omega)$ then $A$ is irreducible.

\begin{proposition}\label{P:Co-Do}
Let $T\in\BH$. Assume that $\sigma(T)=\sigma_{lre}(T)$ is connected. If $\lambda\in\bC$ and $\dist(\lambda,\sigma(T))=\delta>0$, then, given $\eps>0$, there exists $R\in B_1(\Omega)$ such that $\|R-T\|<2\delta+\eps$ and $\sigma(R)=\sigma(T)\cup\overline{\Omega}$, where $\Omega=B(\lambda,\delta)$.
\end{proposition}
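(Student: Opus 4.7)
My plan is to apply \cite[Lemma 3.2.6]{JiangWang06} (in adjoint form) to write $T$, up to a small compact perturbation $K_1$ with $\|K_1\| < \eps/2$, in block lower-triangular form
\[
T + K_1 = \begin{bmatrix} \mu I_{\cH_1} & 0 \\ F_1 & T_0 \end{bmatrix} \quad \text{on } \cH = \cH_1 \oplus \cH_0,
\]
where $\dim \cH_1 = \infty$, $\mu \in \sigma(T)$ is chosen with $|\mu - \lambda| = \delta$ (such a $\mu$ exists by compactness and the hypothesis $\dist(\lambda, \sigma(T)) = \delta$, and satisfies $\mu \in \sigma_{lre}(T)$), and $T_0 \in \B(\cH_0)$ satisfies $\sigma(T_0) = \sigma_{lre}(T_0) = \sigma(T)$. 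The lower-triangular (rather than upper-triangular) form is obtained by applying Jiang--Wang's lemma to $T^*$ at $\bar\mu$ and then taking adjoints.

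Identifying $\cH_1$ with $\ell^2(\bN)$ via an \onb{} $\{e_n\}_{n \geq 0}$ and letting $S$ denote the unilateral shift, I replace the upper-left block $\mu I$ by the canonical Cowen--Douglas operator $\lambda I + \delta S^*$ (which lies in $B_1(\Omega)$), and adjust the off-diagonal entry from $F_1$ to $F := F_1 + F_2$, where $F_2$ is a compact operator with $\|F_2\| < \eps/4$ chosen so that $\mathrm{ran}\,F$ is cyclic for $T_0$. Define
\[
R := \begin{bmatrix} \lambda I + \delta S^* & 0 \\ F & T_0 \end{bmatrix}.
\]
The perturbation $R - (T + K_1)$ is a block column operator with entries $(\lambda - \mu)I + \delta S^*$ and $F_2$, of norms at most $2\delta$ and $\eps/4$ respectively, so $\|R - (T + K_1)\| \leq \sqrt{(2\delta)^2 + (\eps/4)^2} \leq 2\delta + \eps/4$, which combined with $\|K_1\| < \eps/2$ gives $\|R - T\| < 2\delta + \eps$. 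The spectral identity $\sigma(R) = \sigma(T) \cup \overline{\Omega}$ follows from the block lower-triangular structure: the inclusion $\sigma(R) \subseteq \sigma(\lambda I + \delta S^*) \cup \sigma(T_0) = \overline{\Omega} \cup \sigma(T)$ is automatic, while the reverse inclusion uses $\Omega \subseteq \sigma(R)$ (from the kernel computation below) together with the observation that for $z \in \sigma(T) \setminus \overline{\Omega}$ the top-left block is invertible but $T_0 - z$ is not.

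To verify $R \in B_1(\Omega)$, write $z = \lambda + \delta w$ for $z \in \Omega$, so $|w| < 1$. Since $\sigma(T_0) \cap \Omega = \emptyset$, the block $T_0 - z$ is invertible, while $\delta(S^* - w)$ has one-dimensional kernel $\mathrm{span}(k_w)$ with $k_w = \sum_{n \geq 0} w^n e_n$ and is surjective. A direct computation then gives
\[
\ker(R - z) = \mathrm{span}\bigl((k_w,\, -(T_0 - z)^{-1} F k_w)\bigr),
\]
which is one-dimensional, and $R - z$ is surjective. The condition $\vee_{z \in \Omega} \ker(R - z) = \cH$ is the crux of the proof: pairing a putative orthogonal vector $(\eta_1, \eta_0) \in \cH_1 \oplus \cH_0$ against the kernel and expanding the identity $\langle k_w, \eta_1 \rangle = \langle (T_0 - z)^{-1} F k_w, \eta_0 \rangle$ via the operator-valued Taylor series $(T_0 - \lambda - \delta w)^{-1} = \sum_{m \geq 0} \delta^m w^m (T_0 - \lambda)^{-m-1}$ on $B(0,1)$ and matching coefficients of $w^n$ yields a constraint that forces $\eta_0 = 0$ once $\mathrm{ran}\,F$ is cyclic for $T_0$, after which the matched coefficient equation at each $n$ forces $\eta_1 = 0$.

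The principal obstacle is Step~2: arranging the small compact perturbation $F_2$ of the uncontrolled coupling $F_1$ so that $\mathrm{ran}(F_1 + F_2)$ is cyclic for $T_0$. This is achievable by a standard perturbation argument exploiting the separability of $\cH_0$ (for example, by taking $F_2 e_n = \alpha_n v_n$ with $\{v_n\}$ a dense unit sequence in $\cH_0$ and $(\alpha_n)$ rapidly decaying square-summable scalars, so that $F_2$ is Hilbert--Schmidt of small norm, and verifying cyclicity using that no nonzero $T_0^*$-invariant subspace can be orthogonal to the resulting range), but it is the point where the technical details are most delicate.
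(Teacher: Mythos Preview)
Your construction tracks the paper's proof through the first two moves: apply the Jiang--Wang lemma to extract a scalar corner at a point $\mu\in\sigma_{lre}(T)$ with $|\mu-\lambda|=\delta$, then replace that corner by $\lambda I+\delta S^*$ at cost $\leq 2\delta$. The paper, however, does \emph{not} try to verify condition~(c) of $B_1(\Omega)$ by hand. It simply checks that the resulting operator has the correct spectral picture (connected spectrum $\sigma(T)\cup\overline{\Omega}$, semi-Fredholm domain $\rho\cup\Omega$ with $\ind=1$ on $\Omega$) and then invokes Herrero's theorem \cite[Theorem~1.2]{herr87} to produce a further compact perturbation $K_3$ with $\|K_3\|<\eps/2$ landing in $B_1(\Omega)$.

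Your direct attack on condition~(c) has a genuine gap. Matching Taylor coefficients in the identity $\langle k_w,\eta_1\rangle=\langle (T_0-z)^{-1}Fk_w,\eta_0\rangle$ yields, for each $N\geq 0$,
\[
\overline{(\eta_1)_N}=\Bigl\langle\sum_{m=0}^N \delta^m (T_0-\lambda)^{-m-1}Fe_{N-m},\,\eta_0\Bigr\rangle,
\]
which simply \emph{defines} $(\eta_1)_N$ in terms of $\eta_0$; there is no overdetermination, and cyclicity of $\ran\,F$ for $T_0$ plays no role here. The only residual condition is that the resulting sequence lie in $\ell^2$, and this is \emph{not} forced by cyclicity. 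For a concrete failure, take $\sigma(T)=\{\mu_0\}$ (so one may choose $T_0=\mu_0 I$ in the Jiang--Wang block) and suppose the off-diagonal entry $F$ is invertible. Writing $\alpha=(\mu_0-\lambda)/\delta\in\partial\mathbb D$, the orthogonality relation becomes $\delta(\alpha-w)\,\widetilde{\eta_1}(w)=\widetilde{F^*\eta_0}(w)$ in $H^2(\mathbb D)$; choosing $\eta_0=(F^*)^{-1}(\bar\alpha e_0-e_1)$ gives $\widetilde{F^*\eta_0}(w)=\alpha-w$ and hence $\eta_1=\delta^{-1}e_0\in\ell^2$, so $(\delta^{-1}e_0,\eta_0)$ is a nonzero vector orthogonal to every $\ker(R-z)$. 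Thus your $R$ need not lie in $B_1(\Omega)$ even when $\ran\,F$ is dense (a fortiori cyclic). The delicate point is therefore not arranging cyclicity of $\ran\,F$, but that cyclicity is the wrong hypothesis altogether; the clean fix is the paper's appeal to Herrero.
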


\begin{proof}
Assume that $\lambda_0\in\sigma(T)$ and $|\lambda-\lambda_0|=\dist(\lambda,\sigma(T))$.
By \cite[Lemma 3.2.6]{JiangWang06}, there exists $K_1\in\KH$ with $\|K_1\|<\eps/2$ such that
\[T+K_1=\begin{bmatrix}
  \lambda_0I_0 & E\\
  0& A
\end{bmatrix}\] relative to some decomposition $\cH=\cH_0\oplus\cH_1$, where $\dim\cH_0=\infty=\dim\cH_1$, $I_0$ is the identity operator on $\cH_0$ and $\sigma(A)=\sigma_{lre}(A)=\sigma(T)$.

Denote by $S$ the unilateral shift of multiplicity one acting on $\cH_0$.
Set $$K_2=\begin{bmatrix}
  \delta S^*+(\lambda-\lambda_0)I_0 & 0\\
  0& 0
\end{bmatrix}
\begin{matrix}
\cH_0\\ \cH_1
\end{matrix}.$$ Then $K_2\in\BH$, $\|K_2\|\leq 2\delta$ and
 $$T+K_1+K_2=\begin{bmatrix}
  \lambda I_0+\delta S^* & E\\
  0& A
\end{bmatrix}\begin{matrix}
\cH_0\\ \cH_1
\end{matrix}.$$
One can check that
\begin{enumerate}
\item[(a)] $\sigma(T+K_1+K_2)=\sigma(T)\cup\overline{\Omega}$ is connected, and
\item[(b)] $\rho_{s-F}(T+K_1+K_2)=\rho(T+K_1+K_2)\cup\Omega$ and $\ind(T+K_1+K_2-z)=1$ for $z\in\Omega$.
\end{enumerate} Then, by \cite[Theorem 1.2]{herr87}, there exists $K_3\in\KH$ with $\|K_3\|<\eps/2$ such that
$T+K_1+K_2+K_3\in B_1(\Omega)$. Take $R=T+K_1+K_2+K_3$. Then $R$ satisfies all requirements.
\end{proof}

Now we are ready to complete the proof of Theorem \ref{T:main}.

\begin{proof}[of Theorem \ref{T:main} (b)] The equality $\overline{\cso}=\overline{\rcso}$ follows from Theorem \ref{T:AueReduc}.
It remains to prove $\overline{\cso}\subset\overline{\icso}$.

Let $W\in\overline{\cso}$. By \cite[Theorem 3]{ZhuMA}, $W$ is approximately unitarily equivalent to a complex symmetric operator. In view of Corollary \ref{C:ApproReduc}, there exist two complex symmetric operator $T$ and $R$ such that
$W\cong_a T\oplus R\oplus R$, where $\sigma(R)=\sigma_{lre}(R)=\sigma_{lre}(T)$. Up to unitary equivalence and a compact perturbation of arbitrarily small norm, we can directly assume that $W=T\oplus R\oplus R$ and $R,T\in\BH$.

Now fix $\eps>0$.

{\it Step 1.} Small compact perturbation of $T$.

By Proposition \ref{P:SpectIncrese}, there exists $D\in\KH$ with $\|D\|<\eps/8$ such that $T':=T+D$ is complex symmetric and \begin{equation}\label{Eq:BrowSpe}
\sigma_B(T')\subset\sigma_{lre}(T)+B(0, \eps/8). \end{equation}

{\it Step 2.} Small perturbation of $R$.

By Corollary \ref{L:SpectEnlarge}, there exists $A\in\BH$ such that
\begin{enumerate}
\item[(i)] $\|A-R\|<\eps/2$,
\item[(ii)] $\sigma(R)+B(0, \eps/4)\subset\sigma(A)$, and
\item[(iii)] $\sigma(A)=\sigma_{lre}(A)$ consists of finite components.
\end{enumerate}
Since $\sigma(R)=\sigma_{lre}(T)$, by (ii), we have \begin{equation}\label{Eq:SpecEnlar}
\sigma_{lre}(T)+B(0, \eps/4)\subset\sigma(A).
\end{equation}

Assume that $\Gamma_1,\cdots,\Gamma_n$ are all components of $\sigma(A)$. Then, by the Riesz Decomposition Theorem, there exists a decomposition $\cH=\oplus_{i=1}^n\cH_i$ with respect to which $A$ can be written as
\[A=\begin{bmatrix}
A_1&E_1&\cdots& \cdots&*\\
0&A_2&E_2&\cdots&*\\
0&0&\ddots&\ddots&\vdots\\
\vdots&\vdots&\ddots&A_{n-1}&E_{n-1}\\
0&0&\cdots&0&A_n
\end{bmatrix}\begin{matrix}
\cH_1\\ \cH_2 \\ \vdots \\ \cH_{n-1} \\ \cH_n
\end{matrix},\] where $A_i\in\B(\cH_i)$ and $\sigma(A_i)=\Gamma_i$, $i=1,\cdots,n$. It is trivial to see $\sigma(A_i)=\sigma_{lre}(A_i)$ for each $i$. Moreover, up to a compact perturbation of arbitrarily small norm, we can directly assume that $E_i\ne 0$ for all $1\leq i\leq n-1$.

For each $i$, choose $\lambda_i\in\bC\setminus\Gamma_i$ such that $\dist(\lambda_i,\Gamma_i)=\delta_i<\eps/8$. Since $\{\Gamma_i\}$ are pairwise disjoint, we can assume that $\{\Gamma_i\cup B(\lambda_i,\delta_i)^-\}$ are still pairwise disjoint. Denote $\Omega_i=B(\lambda_i,\delta_i)$, $i=1,\cdots,n$.
By Proposition \ref{P:Co-Do}, we can choose $B_i\in B_1(\Omega_i)$ such that $\|A_i-B_i\|<\eps/4$ and $\sigma(B_i)=\Gamma_i\cup\Omega_i^-$.

For each $i$, set $K_i=B_i-A_i$. Define $K=\oplus_{i=1}^n K_i$.
Then $\|K\|< \eps/8$ and
\begin{equation}\label{Eq:MatrixReptn}
 R+K=\begin{bmatrix}
B_1&E_1&\cdots&*\\
&B_2&\ddots&\vdots\\
&&\ddots&E_{n-1}\\
&&&B_n
\end{bmatrix}\begin{matrix}
\cH_1\\ \cH_2 \\ \vdots \\ \cH_n
\end{matrix};
\end{equation}the entries not shown are zero.

Since $\Omega_i\cap\sigma(A)=\emptyset$ and $\sigma_{lre}(T)+B(0,\eps/8)\subset\sigma(A)$, we obtain
$$\Omega_i\cap[\sigma_{lre}(T)+B(0,\eps/8)]=\emptyset.$$
In view of (\ref{Eq:BrowSpe}), we deduce that
$\sigma(T')\cap\Omega_i=\sigma_0(T')\cap\Omega_i$. Since $\sigma_0(T')$ is at most countable, we can choose $\mu_i\in\Omega_i\setminus\sigma(T')$.
So $\vee_{k\geq 1}\ker(B_i-\mu_i)^k=\cH_i$ for $1\leq i\leq n$.
Since $\{\sigma(B_i)\}$ are pairwise disjoint, it follows that
\begin{equation}\label{Eq:triangular}\vee\{\ker(R+K-\mu_i)^k: 1\leq i\leq s, k\geq 1\}=\oplus_{i=1}^s\cH_i, \  \ \forall 1\leq s \leq n.\end{equation}

{\it Claim.} $R+K$ is irreducible.

Assume that $P$ is a projection of $\cH$ and $P(R+K)=(R+K)P$.
For each $s$ with $1\leq s \leq n$, it can be seen from (\ref{Eq:triangular}) that $\oplus_{i=1}^s\cH_i$ are hyperinvariant under $R+K$.
Thus $P=\oplus_{i=1}^n P_i,$ where $P_i\in\B(\cH_i)$ is a projection and $P_iB_i=B_iP_i$.
Since each $B_i$ is irreducible, we have either $P_i=0$ or the identity operator on $\cH_i$.
On the other hand, from $P(R+K)=(R+K)P$, one can see $P_iE_i=E_iP_{i+1}$ for $1\leq i\leq n-1$.
Since $E_i\ne0$ for all $i$, one can deduce that $P=I$ or $P=0$. This proves the claim.

{\it Step 3.} Construction.

Assume that $C_1,C_2$ are conjugations on $\cH$ such that $C_1RC_1=R^*$ and $C_2T'C_2=(T')^*$.

Set $$W'=\begin{bmatrix}
R+K&\eps I/8&\eps I/16\\
0&T'& \eps C_2C_1/8\\
0&0& R+C_1K^*C_1
\end{bmatrix},\ \ C=\begin{bmatrix}
0&0&C_1 \\
0&C_2& 0\\
C_1&0& 0
\end{bmatrix}.$$
Then $C$ is a conjugation on $\cH^{(3)}$, $CW'C=(W')^*$ and
\[\|W'-W\|= \left\|\begin{bmatrix}
K&\eps I/8&\eps I/16\\
0&D& \eps C_2C_1/8\\
0&0& CK^*C
\end{bmatrix}\right\|<\eps.\]
Now it remains to verify that $W'$ is irreducible.

{\it Step 4.} Verification of irreducibility.

For convenience, we assume that $$W'=\begin{bmatrix}
R+K&\eps I/8&\eps I/16\\
0&T'& \eps C_2C_1/8\\
0&0& R+C_1K^*C_1
\end{bmatrix}\begin{matrix}
\cK_1\\ \cK_2 \\ \cK_3
\end{matrix}.$$

Assume that $Q$ is a projection on $\oplus_{i=1}^3\cK_i$ such that $QW'=W'Q$.

Since $\ker(B_i-\lambda)^*=\{0\}$ for all $i$ and all $\lambda\in\bC$, one can check that $\ker(R+K-\lambda)^*=\{0\}$ for all $\lambda\in\bC$.
Noting that
$C_1(R+K-\lambda)^*C_1=(R+C_1K^*C_1-\overline{\lambda})$, we deduce that $\ker(R+C_1K^*C_1-\lambda)=\{0\}$.
 In particular, $\ker(R+C_1K^*C_1-\mu_i)^*=\{0\}$ for all $1\leq i\leq n$. On the other hand, since $\mu_i\notin\sigma(T')$ for $1\leq i\leq n$,
 we have
 \begin{align*}
 &\vee\{\ker(W'-\mu_i)^k: 1\leq i\leq n, k\geq 1\}\\
 =&\vee\{\ker(R+K-\mu_i)^k: 1\leq i\leq n, k\geq 1\}=\cK_1\oplus 0\oplus 0.
 \end{align*}
Thus $\cK_1$ is hyperinvariant under $W'$. Likewise, one can show that
\begin{align*}
 &\vee\{\ker({W'}^*-\overline{\mu_i})^k: 1\leq i\leq n, k\geq 1\}\\
 =&\vee\{\ker(R^*+C_1KC_1-\overline{\mu_i})^k: 1\leq i\leq n, k\geq 1\}\\
 =&\vee\{\ker C_1(R+K-\mu_i)^k C_1: 1\leq i\leq n, k\geq 1\}\\
  =&\vee\{\ker(R+K-\mu_i)^k C_1: 1\leq i\leq n, k\geq 1\}\\
  =&C_1\Big[\vee\{\ker(R+K-\mu_i)^k: 1\leq i\leq n, k\geq 1\}\Big]\\
 =&C_1(\cH)=0\oplus0\oplus\cK_3.
 \end{align*} Thus $\cK_3$ is hyperinvariant under $(W')^*$. So $Q$ can be written as
 $$Q=\begin{bmatrix}
Q_1&*&0\\
0&Q_2& 0\\
0&*& Q_3
\end{bmatrix}\begin{matrix}
\cK_1\\ \cK_2 \\ \cK_3
\end{matrix}.$$ Since $Q$ is self-adjoint, we deduce that $Q=\oplus_{i=1}^3 Q_i$.

From $QW'=W'Q$, one can see
\begin{enumerate}
\item[(iv)] $Q_1(R+K)=(R+K)Q_1$, $Q_3(R+C_1K^*C_1)=(R+C_1K^*C_1)Q_3$, and
\item[(v)]  $Q_1(\eps I/8)=(\eps I/8)Q_2$, $Q_1(\eps I/16)=(\eps I/16)Q_3$ .
\end{enumerate}
Statement (v) implies $Q_1=Q_2=Q_3$. On the other hand, since $R+K$ is irreducible, it follows that either $Q_1=I$ or $Q_1=0$.
Hence, either $Q$ is the identity operator on $\oplus_{i=1}^3\cK_i$ or $Q=0$.
So $W'$ is irreducible.
\end{proof}

\subsection{Small compact perturbations}\label{S:smallCmptPert}

The aim of this subsection is to provide several special classes of complex symmetric operators belonging to the compact closure of $\icso$.

Recall that an operator $T$ is said to be {\it block-diagonal} if $T$ is the direct sum of some operators acting on finite-dimensional Hilbert spaces.

\begin{lemma}\label{C:SCmptPer}
Let $T\in\cso$. If $T$ is compact or block-diagonal, then, given $\eps>0$, there exists $K\in\KH$ with $\|K\|<\eps$ such that $T+K\in\icso$.
\end{lemma}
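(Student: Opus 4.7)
\medskip

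\noindent\textbf{Proof proposal.} The plan is to reduce both cases to a direct application of Theorem \ref{T:smaoCmpt}. That theorem says: if $T\in\cso$ and either $T+T^*$ or $T-T^*$ is a diagonal operator (i.e.\ admits an orthonormal basis of eigenvectors), then $T$ can be perturbed by an arbitrarily small compact operator to land in $\icso$. So all I need to do is verify, in each of the two hypotheses of the lemma, that $T+T^*$ is a diagonal operator, and then invoke Theorem \ref{T:smaoCmpt} verbatim.

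First I would handle the compact case. Since $T\in\KH$, the operator $T+T^*$ is self-adjoint and compact. The classical spectral theorem for compact self-adjoint operators furnishes a complete orthonormal system of eigenvectors of $T+T^*$, so $T+T^*$ is a diagonal operator in the sense required. Theorem \ref{T:smaoCmpt} then gives the desired $K\in\KH$ with $\|K\|<\eps$ and $T+K\in\icso$.

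Next I would treat the block-diagonal case. Write $\cH=\oplus_{n}\cH_n$ with $\dim\cH_n<\infty$ and $T=\oplus_n T_n$ with $T_n\in\B(\cH_n)$. Then
\[
T+T^*=\bigoplus_{n}(T_n+T_n^*),
\]
and each summand $T_n+T_n^*$ is a self-adjoint operator on a finite-dimensional Hilbert space. The finite-dimensional spectral theorem produces, for every $n$, an orthonormal basis of $\cH_n$ consisting of eigenvectors of $T_n+T_n^*$. Concatenating these bases yields an orthonormal basis of $\cH$ made of eigenvectors of $T+T^*$; so $T+T^*$ is again diagonal, and Theorem \ref{T:smaoCmpt} finishes the proof.

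There is essentially no obstacle here; the content of the lemma is simply that the hypothesis of Theorem \ref{T:smaoCmpt} is automatic in the two settings at hand. The only point one has to be a little careful about is the reading of the word ``diagonal'' in the statement of Theorem \ref{T:smaoCmpt}: inspection of its proof shows that the theorem only uses the existence of \emph{some} orthonormal basis diagonalising $T+T^*$ (the basis is then refined so as to be fixed by the conjugation on each eigenspace), which is exactly what the spectral theorem supplies in both of our cases.
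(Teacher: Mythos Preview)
Your proposal is correct and follows exactly the same route as the paper's proof: in both the compact and block-diagonal cases you observe that $T+T^*$ is self-adjoint and (via the spectral theorem) diagonal, and then invoke Theorem~\ref{T:smaoCmpt}. The paper's argument is just a terser version of what you wrote.
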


\begin{proof}
If $T$ is compact, then $T+T^*$ is self-adjoint, compact and hence diagonal. If $T$ is block-diagonal, then so is $T+T^*$. Since $T+T^*$ is self-adjoint, it follows that it is diagonal. Then, by Theorem \ref{T:smaoCmpt}, the result follows readily.
\end{proof}

Recall that an operator $T\in\BH$ is called a {\it weighted shift} if there exist an \onb~
$\{e_i\}$ and a sequence $\{w_i\}$ of complex numbers such that $Te_i=w_ie_{i+1}$ for all $i$.
If the index $i$ runs over the positive integers, then $T$ is called a
{\it unilateral weighted shift}; while if $i$ runs over integers, then $T$ is called a
{\it bilateral weighted shift.}

\begin{proposition}\label{P:WeiShift}
Let $T\in\cso$. If $T$ is a weighted shift, then, given $\eps>0$, there exists $K\in\KH$ with $\|K\|<\eps$ such that $T+K\in\icso$.
\end{proposition}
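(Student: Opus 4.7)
My plan is to reduce to Theorem~\ref{T:smaoCmpt} by diagonalizing the self-adjoint part of $T$ through a small compact $C$-symmetric perturbation.

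Let $C$ be a conjugation with $CTC=T^*$, and set $A:=(T+T^*)/2$; one easily checks $CAC=A$. The core of the argument is the following ``real'' version of the Weyl--von Neumann--Berg theorem, which I would first establish: for any self-adjoint $A\in\BH$ with $CAC=A$ and any $\delta>0$, there is a compact self-adjoint $E\in\KH$ with $CEC=E$, $\|E\|<\delta$, such that $A-E$ is a diagonal operator. To prove this I would pick a $C$-fixed \onb~ $\{f_n\}$ via \cite[Lemma 2.11]{Gar14} (as used in the proof of Theorem~\ref{T:smaoCmpt}); in this basis $A$ has a real symmetric matrix. The standard inductive construction of orthonormal approximate eigenvectors underlying the usual Weyl--von Neumann--Berg theorem can then be carried out within the real span of $\{f_n\}$: given a complex unit approximate eigenvector $v$ for $A$ with $\|Av-\lambda v\|$ small, at least one of $\textup{Re}(v)$, $\textup{Im}(v)$ has norm $\geq 1/\sqrt{2}$, and normalizing it yields a real unit approximate eigenvector with controlled error loss. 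Iterating produces a real orthonormal eigenbasis for a diagonal operator $D$ such that $E:=A-D$ is compact, self-adjoint, satisfies $\|E\|<\delta$, and has real matrix in $\{f_n\}$, whence $CEC=E$.

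Granted this lemma (applied with $\delta=\eps/2$), set $T':=T-E$. Then $CT'C=T^*-CEC=T^*-E=(T-E)^*=T'^*$, so $T'\in\cso$, and $T'+T'^*=2(A-E)$ is diagonal. Theorem~\ref{T:smaoCmpt} applied to $T'$ yields a compact $K'$ with $\|K'\|<\eps/2$ such that $T'+K'\in\icso$. Setting $K:=K'-E\in\KH$ gives $\|K\|<\eps$ and $T+K=T'+K'\in\icso$, proving the proposition.

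The main obstacle is the real Weyl--von Neumann lemma---specifically, arranging that the compact perturbation $E$ intertwines $C$ in the sense $CEC=E$; the real/imaginary-part trick above is the crucial ingredient. For the unilateral case one can alternatively avoid the lemma entirely: since $\dim\ker T^*\geq 1$ (as $T^*e_1=0$) while $\dim\ker T=\#\{n:w_n=0\}$, complex symmetry forces infinitely many zero weights, making $T$ block-diagonal with finite blocks, and Lemma~\ref{C:SCmptPer} applies at once. Only the bilateral case genuinely requires the Weyl--von Neumann step.
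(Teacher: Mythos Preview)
Your approach is sound and takes a genuinely different route from the paper---and in fact your argument nowhere uses that $T$ is a weighted shift. The paper proceeds case by case via the classification of complex symmetric weighted shifts from \cite{ZhuLi}: the unilateral case and the bilateral case with infinitely many zero weights reduce to block-diagonal operators (Lemma~\ref{C:SCmptPer}), while the remaining bilateral cases are handled by explicit finite-rank perturbations exploiting the shift structure (e.g., tweaking two weights to break periodicity of the modulus sequence and invoking \cite[Problem~159]{Halmos82}, or building a $3\times 3$ block-triangular model when finitely many weights vanish). Your route instead reduces everything to Theorem~\ref{T:smaoCmpt} via the ``real'' Weyl--von Neumann lemma, and your sketch of that lemma is correct: in a $C$-fixed \onb\ the operator $A=(T+T^*)/2$ restricts to a self-adjoint operator on the real Hilbert space $\{x\in\cH:Cx=x\}$, the standard Weyl--von Neumann construction goes through over $\mathbb{R}$ (your real/imaginary-part trick, or direct use of real spectral projections, supplies the needed real approximate eigenvectors), and the resulting real compact perturbation complexifies with the same norm and satisfies $CEC=E$. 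The upshot is that your argument, once the lemma is written out, yields $T\in\overline{\icso}^c$ for \emph{every} $T\in\cso$, which would answer Question~\ref{Q:smallCmpt} affirmatively---considerably more than Proposition~\ref{P:WeiShift} asserts. What the paper's hands-on approach buys is that it stays within elementary shift calculations and avoids having to develop the $C$-equivariant Weyl--von Neumann step; what your approach buys is generality.
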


\begin{proof}
If $T$ is a unilateral weighted shift, then, by \cite[Theorem 3.1]{ZhuLi}, $T$ is block-diagonal. In view of Lemma \ref{C:SCmptPer},
the result is clear. In the remaining, we assume that $T$ is a bilateral weighted shift with weights $\{\alpha_i\}_{i\in\bZ}$ and $Te_i=\alpha_ie_{i+1}$ for $i\in\bZ$.

The proof will be divided into three cases.

{\it Case 1.} $\card\{i\in\bZ: \lambda_i=0\}=0$.

By \cite[Theorem 4.4]{ZhuLi}, it follows that there exists $k\in\bZ$ such that $|\alpha_{k-j}|=|\alpha_j|$ for all $j\in\bZ$.
We can choose $r_1,r_2\in\bC$ with $|r_1|+|r_2|<\eps$ such that $|r_1+\alpha_{0}|=|r_2+\alpha_k|>0$ and
$|r_1+\alpha_{0}|\ne|\alpha_j|$ for all $j\in\bZ\setminus\{0,k\}$.
Denote by $T_\eps$ the bilateral weighted shift with weights $\{\beta_i\}_{i\in\bZ}$ relative to the same \onb, where
\[\beta_i=\begin{cases}
\alpha_i,& i\in\bZ\setminus\{0,k\},\\
\alpha_0+r_1,& i=0,\\
\alpha_k+r_2,& i=k.
\end{cases}\] That is, $T_\eps e_i=\beta_i e_{i+1}$ for all $i$.
 Then $T-T_\eps$ is an operator of rank not greater than $2$ and $\|T-T_\eps\|<\eps$.

Since $|\beta_{k-j}|=|\beta_j|\ne 0$ for all $j$, by \cite[Theorem 4.4]{ZhuLi}, $T_\eps$ is injective and complex symmetric.
Note that $|\beta_0|=|\beta_i|$ precisely when $i=0$ or $k$. This shows that the sequence $\{|\beta_i|\}_{i\in\bZ}$ is not periodic. Using \cite[Problem 159]{Halmos82}, one can see that $T_\eps$ is irreducible.

{\it Case 2.} $1\leq\card\{i\in\bZ: \lambda_i=0\}<\infty$.

By \cite[Theorem 4.8]{ZhuLi}, this case means that $T= A^*\oplus B\oplus A $, where $A$ is an injective unilateral weighted shift and $B$ is absent or a complex symmetric operator acting a finite-dimensional space $\cK_0$ with $\sigma(B)=\{0\}$.
If $B$ is absent, then, using a similar argument as in Case 1, one can prove the conclusion. Next we deal with the latter case.

{\it Claim.} There exists $F\in\B(\cK_0)$ with $\|F\|<\eps/2$ such that $B+F$ is invertible, irreducible and complex symmetric.

Denote by $I_0$ the identity on $\cK_0$. Then $B+\eps I_0/4$ is invertible and complex symmetric.
By Lemma \ref{C:SCmptPer}, there exists $F_0\in\B(\cK_0)$ with $\|F_0\|<\eps/4$ such that $B+\eps I_0/4+F_0$
is irreducible and complex symmetric. By the upper semi-continuity of spectrum, we may also assume that $B+\eps I_0/4+F_0$ is invertible. Set $F=F_0+\eps I_0/4$. Then $F$ satisfies all requirements.
This proves the claim.

Assume that $C_0$ is the conjugation on $\cK_0$ such that $C_0(B+F)C_0=(B+F)^*$.

Denote by $\cK_1$ the underlying space of $A$. Assume that $\{f_i\}_{i=1}^\infty$ is an \onb~ of $\cK_1$ and
$Af_i=\mu_if_{i+1}$ for $i\geq 1$. Up to unitary equivalence we may assume that $\mu_i>0$ for all $i$.
For $x\in\cK_1$ with $x=\sum_iw_if_i$, define $C_1x=\sum_i\overline{w_i}f_i$. Thus $C_1$ is a conjugation on $\cK_1$ and one can verify that $C_1AC_1=A$.

Now choose a nonzero fine-rank operator $G:\cK_0\rightarrow \cK_1$ with $\|G\|<\eps/4$ and define an operator $K$ on $\cK_1\oplus\cK_0\oplus\cK_1$ as
\[K=\begin{bmatrix}
0&G&0\\
0&F&C_0G^*C_1\\
0&0&0
\end{bmatrix}\begin{matrix}
\cK_1\\ \cK_0\\ \cK_1
\end{matrix}.\]Clearly, $K$ is of finite rank, $\|K\|<\eps$ and
\[T+K=\begin{bmatrix}
A^*&G&0\\
0&B+F&C_0G^*C_1\\
0&0&A
\end{bmatrix}\begin{matrix}
\cK_1\\ \cK_0\\ \cK_1
\end{matrix}.\]

We shall show that $T+K$ is an irreducible, complex symmetric operator.
Define a conjugate-linear operator $C$ on $\cK_1\oplus \cK_0\oplus \cK_1$ as
\[C=\begin{bmatrix}
0&0&C_1\\
0&C_0&0\\
C_1&0&0
\end{bmatrix}\begin{matrix}
\cK_1\\ \cK_0\\ \cK_1
\end{matrix}.\] It is easy to check that $C$ is a conjugation and $C(T+K)C=(T+K)^*$. So $T+K$ is complex symmetric.
On the other hand, since $B+F$ is an invertible operator acting on a finite-dimensional space and $\ker (A-z)=\{0\}$ for all $z\in\bC$, it follows that
\[\bigvee_{n\geq 1}\ker(T+K)^n=\cK_1\oplus 0\oplus 0\] and \[ \bigvee_{z\in\bC, n\geq 1}\ker(T+K-z)^n=\cK_1\oplus \cK_0\oplus 0. \]
So both $\cK_1\oplus 0\oplus 0$ and $\cK_1\oplus \cK_0\oplus 0$ are hyperinvariant subspaces of $T+K$. If $P$ is a projection commuting with $T+K$, then $P$ can be written as
\[P=\begin{bmatrix}
P_1&*&*\\
0&P_2&*\\
0&0&P_3
\end{bmatrix}\begin{matrix}
\cK_1\\ \cK_0\\ \cK_1
\end{matrix}.\] Since $P=P^*$, we have $P=P_1\oplus P_2\oplus P_3$.
From $(T+K)P=P(T+K)$ one can see that
\begin{align}\label{Eq:irr1}
  P_1A^*=A^*P_1, \ \ P_3A=AP_3, \ \ P_2(B+F)=(B+F)P_2
\end{align} and
\begin{align}\label{Eq:irr2}
  P_1G=GP_2, \ \ P_2(C_0G^*C_1)=(C_0G^*C_1)P_3.
\end{align}

Since $A,(B+F)$ are both irreducible, by (\ref{Eq:irr1}), $P_i$ is either $0$ or the identity for each $i$.
Noting that $G\ne 0$, we deduce from (\ref{Eq:irr2}) that either $P=I$ or $P=0$. This shows that $T+K$ is irreducible.

{\it Case 3.} $\card\{i\in\bZ: \lambda_i=0\}=\aleph_0$.

By \cite[Theorem 4.9]{ZhuLi}, it follows that $T$ is block-diagonal. In view of Lemma \ref{C:SCmptPer},
the result is clear.  This ends the proof.
\end{proof}

Recall that an operator $T$ is called {\it binormal} if $T$ is unitarily equivalent
to an operator of the form
\[\begin{bmatrix}
N_{1,1}& N_{1,2}\\
N_{2,1} &N_{2,2}
\end{bmatrix},\]
where the entries $N_{i,j}$ are commuting normal operators acting on a Hilbert space.
Garcia and Wogen proved that every binormal operator is complex symmetric (see  \cite[Theorem 1]{Gar10}).

\begin{proposition}\label{P:binormal}
If $T\in\BH$ is binormal, then, given $\eps>0$, there exists $K\in\KH$ with $\|K\|<\eps$ such that $T+K\in\icso$.
\end{proposition}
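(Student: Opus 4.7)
The plan is to reduce to Theorem \ref{T:smaoCmpt} by first producing a small compact perturbation of $T$ whose real part is diagonal and which remains complex symmetric. Since every binormal operator is complex symmetric by \cite[Theorem 1]{Gar10}, fix a conjugation $C$ on $\cH$ with $CTC=T^*$. The operator $H:=T+T^*$ is self-adjoint and satisfies $CHC=H$. I aim to construct a self-adjoint compact $L$ with $CLC=L$ and $\|L\|<\eps/4$ such that $H+2L$ is diagonal on $\cH$. Once such an $L$ is in hand, $T_1:=T+L$ is automatically $C$-symmetric (because $L=L^*$ and $CLC=L$ give $CT_1C=T^*+L=T_1^*$), while $T_1+T_1^*=H+2L$ is diagonal, so Theorem \ref{T:smaoCmpt} yields $K_1\in\KH$ with $\|K_1\|<\eps/2$ and $T_1+K_1\in\icso$; then $K:=L+K_1$ is a compact perturbation of norm less than $\eps$ with $T+K\in\icso$.

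To build $L$, I will use a $C$-equivariant version of the Weyl--von Neumann theorem. By \cite[Lemma 2.11]{Gar14} there is an \onb~ $\{e_n\}$ of $\cH$ with $Ce_n=e_n$ for all $n$, so the real subspace $\cH_{\mathbb{R}}:=\{x\in\cH:Cx=x\}$ is a separable real Hilbert space (with $\{e_n\}$ as a real \onb~ and with real-valued inner product inherited from $\la\cdot,\cdot\ra$) on which $H$ acts as a real self-adjoint operator. The standard recursive proof of Weyl--von Neumann, carried out inside $\cH_{\mathbb{R}}$, then produces a real self-adjoint compact operator $L_0$ on $\cH_{\mathbb{R}}$ with $\|L_0\|<\eps/4$ such that $\tfrac12 H|_{\cH_{\mathbb{R}}}+L_0$ is diagonal with respect to some \onb~ of $\cH_{\mathbb{R}}$ consisting of $C$-fixed vectors. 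Writing each $z\in\cH$ uniquely as $z=x+iy$ with $x,y\in\cH_{\mathbb{R}}$ and setting $Lz:=L_0x+iL_0y$ gives a compact self-adjoint operator $L\in\KH$ with $CLC=L$, $\|L\|=\|L_0\|$, and with $H+2L$ diagonal on $\cH$ in the usual sense, which is exactly what was required.

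The only non-routine ingredient is this $C$-equivariant Weyl--von Neumann step, which is the step I expect to demand the most care. No new idea is involved, only the observation that spectral projections, approximately invariant finite-dimensional subspaces, and Gram--Schmidt can all be performed within $\cH_{\mathbb{R}}$ so that every vector constructed is $C$-fixed and every intermediate operator commutes with $C$; once this has been recorded, the rest of the argument is the straightforward assembly described in the first paragraph. Notably, binormality of $T$ is used only through Garcia--Wogen's theorem that binormal operators are complex symmetric, so the same strategy would work for any complex symmetric $T$ to which a $C$-equivariant diagonalization of $\mathrm{Re}\,T$ can be applied.
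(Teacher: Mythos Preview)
Your proposal is correct, but it follows a genuinely different route from the paper's proof, and in fact your argument proves much more than the proposition at hand.

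The paper's proof exploits the binormal structure directly: writing $T$ as a $2\times 2$ block matrix with commuting normal entries $N_{i,j}$, it applies the Weyl--von Neumann--Berg theorem to diagonalize the four $N_{i,j}$ simultaneously, so that $T+K_1$ becomes an orthogonal direct sum of $2\times 2$ matrices. Each summand is binormal, hence complex symmetric, so $T+K_1$ is a block-diagonal element of $\cso$, and Lemma~\ref{C:SCmptPer} (itself a corollary of Theorem~\ref{T:smaoCmpt}) finishes the job.

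Your approach bypasses the block structure entirely. You use binormality only to invoke \cite[Theorem~1]{Gar10} and obtain a conjugation $C$ with $CTC=T^*$; everything thereafter uses only complex symmetry. The key new ingredient is a $C$-equivariant Weyl--von Neumann theorem: since $H=T+T^*$ satisfies $CHC=H$, its spectral projections commute with $C$, so the standard recursive construction (partition the spectrum, project a $C$-fixed vector through the spectral pieces, compress, recurse on the orthogonal complement) stays entirely inside the real form $\cH_{\mathbb R}=\{x:Cx=x\}$, producing a self-adjoint compact $L$ with $CLC=L$ and $H+2L$ diagonal. Then $T+L$ is $C$-symmetric with diagonal real part and Theorem~\ref{T:smaoCmpt} applies. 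I checked each step (the real-Hilbert-space spectral theorem, the finite-rank off-diagonal estimates, the diagonalization of each finite block by $C$-fixed eigenvectors, and the complexification preserving norm and compactness) and see no gap.

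What this buys you is substantial: since your argument uses nothing about $T$ beyond $T\in\cso$, it shows $\cso\subset\overline{\icso}^c$ outright, which is precisely Question~\ref{Q:smallCmpt} in the paper. It therefore also subsumes Propositions~\ref{P:WeiShift} and~\ref{P:Connected} and Corollary~\ref{C:root}. The paper's proof, by contrast, is tailored to binormal operators and yields no such general conclusion, though it has the merit of relying only on the classical (non-equivariant) Weyl--von Neumann--Berg theorem. Given that your argument appears to resolve a question the authors leave open, it would be worth writing out the equivariant perturbation lemma in full detail.
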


\begin{proof}
For convenience, we directly assume that \[T=\begin{bmatrix}
N_{1,1}& N_{1,2}\\
N_{2,1} &N_{2,2}
\end{bmatrix},\]
where the entries $N_{i,j}$ are commuting normal operators acting on a Hilbert space $\cK$.

By the Weyl-von Neumann-Berg Theorem (see \cite{Berg} or \cite[page 59]{Davi96}), for given $\eps>0$, there are compact
operators $K_{i,j}$ $(1\leq i,j\leq 2)$ with $\max_{1\leq i,j\leq 2}\|K_{i,j}\|<\eps/8$
such that $N_{i,j}+K_{i,j}$ $(1\leq i,j\leq 2)$ are simultaneously diagonalizable normal operators.
Assume that
\[N_{i,j}+K_{i,j}=\textsl{diag}\{\lambda_1^{(i,j)}, \lambda_2^{(i,j)},\lambda_3^{(i,j)},\cdots\}, \ \ 1\leq i,j\leq 2,\]
relative to an \onb~ $\{e_i\}$ of $\cK$. Then
\[T+\begin{bmatrix}
K_{1,1}& K_{1,2}\\
K_{2,1} &K_{2,2}
\end{bmatrix}=\bigoplus_{i}\begin{bmatrix}
\lambda_i^{(1,1)}& \lambda_i^{(1,2)}\\
\lambda_i^{(2,1)} &\lambda_i^{(2,2)}
\end{bmatrix} \] is the direct sum of some operators on Hilbert spaces of dimension $2$ (hence all them are binormal). Since each binormal operator is complex symmetric, so is $T+K_1$, where $$K_1=\begin{bmatrix}
K_{1,1}& K_{1,2}\\
K_{2,1} &K_{2,2}
\end{bmatrix} .$$ Note that $T+K_1$ is block-diagonal. Thus, by Lemma \ref{C:SCmptPer}, there exists compact $K_2$ with $\|K_2\|<\eps/2$ such that $T+K_1+K_2\in\icso$. Put $K=K_1+K_2$.
Then $K$ satisfies all requirements.
\end{proof}

\begin{corollary}\label{C:root}
If $T\in\BH$ and $T^2$ is normal, then, given $\eps>0$, there exists $K\in\KH$ with $\|K\|<\eps$ such that $T+K\in\icso$.
\end{corollary}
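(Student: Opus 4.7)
The plan is to reduce the statement to Proposition \ref{P:binormal} by showing that every $T\in\BH$ with $T^2$ normal is automatically binormal.

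To establish binormality, I would invoke the Fuglede--Putnam theorem applied to the normal operator $N:=T^2$. Since $T\cdot T^2=T^3=T^2\cdot T$, the operator $T$ commutes with $N$; Fuglede--Putnam then forces $T$ to commute with $N^{*}=T^{*2}$ as well, i.e.\ $TT^{*2}=T^{*2}T$. Taking adjoints gives the companion relation $T^{2}T^{*}=T^{*}T^{2}$. Using these two identities to move one copy of $T^{*}$ through $T^{2}$, a short manipulation shows that both $T^{*}T\cdot TT^{*}$ and $TT^{*}\cdot T^{*}T$ collapse to $T^{*2}T^{2}$. Hence $T^{*}T$ and $TT^{*}$ commute, which is precisely binormality of $T$.

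With binormality in hand, the conclusion is immediate: Proposition \ref{P:binormal} yields, for any prescribed $\eps>0$, a compact operator $K\in\KH$ with $\|K\|<\eps$ and $T+K\in\icso$, which is exactly what the corollary claims.

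The main obstacle here is conceptual rather than technical, namely recognising Fuglede--Putnam as the tool that converts normality of $T^{2}$ into the useful commutation $TT^{*2}=T^{*2}T$; once this is seen, the rest is a two-line calculation. A more laborious alternative would be to mimic the proof of Proposition \ref{P:binormal} directly: use a structure theorem for square roots of normal operators to realise $T$ as a $2\times 2$ matrix of commuting normal entries, diagonalise those entries simultaneously via the Weyl--von Neumann--Berg theorem, reduce to a block-diagonal operator with finite-dimensional blocks, and finish with Lemma \ref{C:SCmptPer}. The Fuglede--Putnam shortcut sidesteps all of this.
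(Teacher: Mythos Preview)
Your argument correctly shows that $T^{*}T$ and $TT^{*}$ commute, but this is Campbell's notion of binormality, not the one used in this paper. Proposition~\ref{P:binormal} is stated for operators unitarily equivalent to a $2\times 2$ block matrix $\begin{bmatrix} N_{1,1} & N_{1,2}\\ N_{2,1} & N_{2,2}\end{bmatrix}$ whose entries are commuting normal operators on a common Hilbert space, and that hypothesis is strictly stronger than $[T^{*}T,\,TT^{*}]=0$. For instance, the unilateral shift $S$ satisfies $S^{*}S=I$ and $SS^{*}=I-e_{0}\otimes e_{0}$, which certainly commute; yet $S$ is not complex symmetric (since $\dim\ker S\ne\dim\ker S^{*}$), so by \cite[Theorem~1]{Gar10} it cannot be binormal in the sense needed for Proposition~\ref{P:binormal}. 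Thus the final step of your argument---invoking Proposition~\ref{P:binormal} from the commutation relation alone---does not go through.

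The paper's proof is essentially the ``laborious alternative'' you sketch at the end: it invokes the Radjavi--Rosenthal structure theorem \cite{Radjavi2} to write $T=N\oplus\begin{bmatrix} A & B\\ 0 & -A\end{bmatrix}$ with $N,A$ normal and $B\ge 0$ commuting with $A$. The second summand is then genuinely binormal in the paper's sense, so the proof of Proposition~\ref{P:binormal} applies to it, while the normal summand $N$ is diagonalised by Weyl--von~Neumann--Berg; Lemma~\ref{C:SCmptPer} finishes the job. Your Fuglede--Putnam step is valid and elegant, but to reach the $2\times 2$ commuting-normal form one still needs a structure theorem of this type, so the shortcut does not actually bypass it.
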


\begin{proof}
By \cite[Theorem 1]{Radjavi2}, $T$ is of  the form
\[T=N\oplus \begin{bmatrix}
A& B\\
0 &-A
\end{bmatrix},\] where $A,N$ are normal and $B$ is a positive operator that commutes with $A$.
 Denote $$R=\begin{bmatrix}
A& B\\
0 &-A
\end{bmatrix}.$$ So $R$ is binormal. From the proof of Proposition \ref{P:binormal}, one can find compact
$K_1$ with $\|K_1\|<\eps/2$ such that $R+K_1$ is both complex symmetric and block-diagonal.
Using the Weyl-von Neumann-Berg Theorem, one can find compact
$K_2$ with $\|K_2\|<\eps/2$ such that $N+K_2$ is diagonal. Thus $T+(K_1\oplus K_2)$ is block-diagonal and complex symmetric.
Using Lemma \ref{C:SCmptPer}, one can see the conclusion.
\end{proof}

\begin{proposition}\label{P:Connected}
Let $T\in\cso$ with $C^*(T)\cap\KH=\{0\}$. If $\sigma(T)$ is connected,
then, given $\eps>0$, there exists $K\in\KH$ with $\|K\|<\eps$ such that $T+K\in\icso$.
\end{proposition}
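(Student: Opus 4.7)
First, the hypotheses force $\sigma(T)=\sigma_{lre}(T)$ to be connected. Indeed, $C^*(T)\cap\KH=\{0\}$ makes the Calkin quotient restricted to $C^*(T)$ an isometric $*$-embedding, so $\sigma(T)=\sigma_e(T)$; bi-quasitriangularity of complex symmetric operators yields $\sigma_e(T)=\sigma_{lre}(T)$.

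My plan is to pass $T$, up to a small compact perturbation, to a reducible three-block model, and then to destroy the reducibility by a construction akin to that in the proof of Theorem \ref{T:main}(b), but with the non-compact scalar-identity gluings there replaced by finite-rank gluings so the entire perturbation stays compact. Concretely, Corollary \ref{C:ApproReduc} supplies $R\in\cso$ with $\sigma(R)=\sigma_{lre}(R)=\sigma(T)$ and $T\cong_a T\oplus R\oplus R$, and the compact-perturbation consequence of Voiculescu's theorem (recalled in the introduction) yields a unitary $U\colon\cH\to\cH^{(3)}$ and $K_0\in\KH$ with $\|K_0\|<\eps/4$ satisfying $UTU^*=(T\oplus R\oplus R)+K_0$. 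It therefore suffices to produce $K_1\in\KH$ with $\|K_1\|<3\eps/4$ for which $(T\oplus R\oplus R)+K_1$ is irreducible and complex symmetric.

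To construct $K_1$, fix conjugations $C_T,C_R$ on $\cH$ with $C_TTC_T=T^*$ and $C_RRC_R=R^*$, and take the anti-diagonal conjugation
$$
C=\begin{bmatrix} 0 & 0 & C_R \\ 0 & C_T & 0 \\ C_R & 0 & 0 \end{bmatrix}
$$
on $\cH^{(3)}$. For finite-rank $F_{12},F_{13},F_{23}$, set
$$
W'=\begin{bmatrix} R & F_{12} & F_{13} \\ 0 & T & F_{23} \\ 0 & 0 & R \end{bmatrix}.
$$
A direct matrix computation gives $CW'C=(W')^*$ precisely when $F_{12}=C_RF_{23}^*C_T$ and $C_RF_{13}C_R=F_{13}^*$. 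Pick a nonzero finite-rank $F_{23}$ with $\|F_{23}\|<\eps/16$, define $F_{12}:=C_RF_{23}^*C_T$, and pick a nonzero $C_R$-symmetric finite-rank $F_{13}$ with $\|F_{13}\|<\eps/16$; then $W'$ is complex symmetric with respect to $C$ and $W'-(R\oplus T\oplus R)$ is compact of norm $<\eps/2$.

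The main obstacle is verifying irreducibility of $W'$. The diagonal blocks $R,T,R$ are not irreducible a priori, and unlike in the proof of Theorem \ref{T:main}(b) only compact perturbations are available. My plan is first to apply additional small compact $C$-symmetric perturbations of the diagonal blocks: using Proposition \ref{P:SpectIncrese} to shrink the Brown spectra and exploiting the connectedness of $\sigma(R)=\sigma(T)$, scalars $\mu_i$ in the semi-Fredholm domain outside $\sigma_B$ may be chosen so that $\vee_{k\geq 1}\ker(W'-\mu_i)^k$ exhausts the first block while $\vee_{k\geq 1}\ker((W')^*-\overline{\mu_i})^k$ exhausts the third, rendering these subspaces hyperinvariant under $W'$ and $(W')^*$ respectively. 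Then any projection $P$ commuting with $W'$ decomposes as $P=P_1\oplus P_2\oplus P_3$ with $P_i\in\{0,I\}$, and the nonvanishing of $F_{12}$ and $F_{23}$ propagates $P_1=P_2=P_3$. Pulling back through $U$, the combined compact perturbation $K=U^*(K_0+K_1)U$ satisfies $\|K\|<\eps$ and $T+K\in\icso$.
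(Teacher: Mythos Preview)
Your approach has a genuine gap at the crucial step: the hyperinvariance argument you borrow from the proof of Theorem~\ref{T:main}(b) cannot be carried out with compact perturbations alone. In that proof the first block is made into an operator with Cowen--Douglas pieces $B_1(\Omega_i)$ via \emph{non-compact} perturbations (Proposition~\ref{P:Co-Do}); this is what forces $\vee_{k\geq 1}\ker(R+K-\mu_i)^k$ to be all of the block. Under compact perturbations the semi-Fredholm index is unchanged, and since $T$ (and hence $R$) is complex symmetric, $\ind(R-\mu)=0$ for every $\mu\in\rho_{s\text{-}F}(R)$. Thus for any $\mu_i$ in the semi-Fredholm domain the space $\vee_{k\geq 1}\ker(R'-\mu_i)^k$ is either $\{0\}$ or the finite-dimensional range of a Riesz idempotent; no choice of $\mu_i$'s will make these span the infinite-dimensional first block. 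Proposition~\ref{P:SpectIncrese} only trims $\sigma_B$ and does nothing to create the infinite-dimensional generalized eigenspaces you need. A second, related gap: even granting the block decomposition $P=P_1\oplus P_2\oplus P_3$, you assert $P_i\in\{0,I\}$ without arranging that the perturbed diagonal blocks are irreducible; doing so for $R$ or $T$ within $\cso$ by a compact perturbation is essentially the conclusion you are trying to prove.

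The paper's argument avoids these obstructions by a different mechanism. From $C^*(T)\cap\KH=\{0\}$ it first gets $T\cong_a T\oplus T$ and then invokes a deep result of Ji (quasitriangular plus small compact equals strongly irreducible) to find compact $K_1$ so that $T+K_1$ is irreducible, $(T+K_1)^*$ has no eigenvalues, and the Rosenblum operator $X\mapsto BX-X(T+K_1)$ is injective for every $B$ without eigenvalues. The $2\times 2$ model
\[
\begin{bmatrix} T+K_1 & E \\ 0 & C(T+K_1)^*C \end{bmatrix}
\]
with $E$ injective and $CEC=E^*$ is then complex symmetric, and irreducibility follows not from any eigenspace-spanning argument but from the Rosenblum condition: a commuting projection must have vanishing off-diagonal $(2,1)$ entry, after which irreducibility of $T+K_1$ and injectivity of $E$ finish the job.
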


\begin{proof}Assume that $C$ is a conjugation on $\cH$ such that $CTC=T^*$. We first prove two claims.

{\it Claim 1.} $T\cong_a T\oplus T\cong_a T^{(\infty)}$.

Define \begin{align*}
\varrho: C^*(T)&\longrightarrow C^*(T\oplus T),\\
X&\longmapsto X\oplus X.
\end{align*}
Then $\varrho$ is a unital, faithful representation of $C^*(T)$.
Since $C^*(T)\cap\KH=\{0\}$, it follows that $\rank X=\rank X\oplus X=\rank\varrho(X)$ for $X\in\BH$.
Then, by \cite[Theorem II.5.8]{Davi96}, $\varrho\cong_a \textrm{id}$, where $\textrm{id}$ is the identity representation of $C^*(T)$.
It follows that $T\oplus T=\varrho(T)\cong_a \textrm{id}(T)=T$. Similarly, one can prove that $T^{(\infty)}\cong_a T$.

By Claim 1, it suffices to prove that $T\oplus T$ lies in the compact closure of the class of irreducible complex symmetric operators on $\cH\oplus\cH$.

{\it Claim 2.} $\sigma(T)=\sigma_{lre}(T)$.

It suffices to prove $\sigma(T)\subset\sigma_{lre}(T)$. Assume that $z\in\bC\setminus\sigma_{lre}(T)$. From $C(z-T)C=(z-T)^*$, we deduce that
$\dim\ker(z-T)=\dim\ker(z-T)^*$ and hence $\ind(z-T)=0$. By Claim 1, $T\cong_a T^{(\infty)}$, which implies that $z-T^{(\infty)}$ is a Fredholm operator and $\ind(z-T^{(\infty)})=0$. In particular, $\dim\ker(z-T^{(\infty)})<\infty$. This shows that $\dim\ker(z-T)=0$ and $z-T$ is invertible.
Therefore $z\notin\sigma(T)$. This proves Claim 2.

%

Since $\sigma(T)$ is connected, it follows from \cite[Theorem 2.1']{JiTams} that there exists $K_1\in\KH$ with $\|K_1\|<\eps/2$ such that
\begin{enumerate}
\item[(a)] $T+K_1$ is irreducible,
\item[(b)] $(T+K_1)^*$ has no eigenvalues, and
\item[(c)] $\ker\tau_{B,T+K_1}=\{0\}$ for any $B\in\BH$ without eigenvalues, where $\tau_{B,T+K_1}$ is the Rosenblum operator on $\BH$ defined as
\[ X\longmapsto BX-X(T+K_1).\]
\end{enumerate}

We choose an injective operator $E$ on $\cH$ satisfying that $\|E\|<\eps/2$ and $CEC=E^*$.
Set $$K=\begin{bmatrix}
  K_1&E\\
  0& CK_1^*C
\end{bmatrix}\begin{matrix}
\cH\\ \cH
\end{matrix}.$$ Then $K\in\KH$, $\|K\|<\eps$ and
$$(T\oplus T) +K=\begin{bmatrix}
  T+K_1&E\\
  0& T+CK_1^*C
\end{bmatrix}=\begin{bmatrix}
  T+K_1&E\\
  0&C (T+K_1)^*C
\end{bmatrix}.$$ It is easy to verify that
$(T\oplus T) +K$ is complex symmetric with respect to the following conjugation on $\cH\oplus\cH$
$$\begin{bmatrix}
 0&C\\
 C& 0
\end{bmatrix}.$$
So now it remains to check that $(T\oplus T) +K$ is irreducible.

Assume that $P$ is a projection on $\cH\oplus\cH$ commuting with $(T\oplus T) +K$ and
\[ P=\begin{bmatrix}
  P_{1,1}& P_{1,2}\\
  P_{2,1}&  P_{2,2}
\end{bmatrix}\begin{matrix}
\cH\\ \cH
\end{matrix}. \] Since $P$ commutes with $(T\oplus T) +K$, direct computation shows that
$$C (T+K_1)^*CP_{2,1}-P_{2,1}(T+K_1)=0.$$
Noting that $ (T+K_1)^* $ and hence $C (T+K_1)^*C$ have no eigenvalue, it follows from statement (c) that $P_{2,1}=0$.
We obtain immediately that $P_{1,2}=0$. Thus $P_{1,1}(T+K_1)=(T+K_1)P_{1,1}$ and $P_{2,2}C(T+K_1)^*C=C(T+K_1)^*CP_{2,2}$.
Since $T+K_1$, $(T+K_1)^*$ (and hence $C(T+K_1)^*C$) are irreducible, we obtain $P_{1,1},P_{2,2}\in\{0,I\}$.
Note that $P_{1,1}E=EP_{2,2}$ and $E$ is injective. Thus we have either $P_{1,1}=P_{2,2}=0$ or $P_{1,1}=P_{2,2}=I$.
Thus $P$ is either $0$ or the identity on $\cH\oplus\cH$. So $(T\oplus T) +K$ is irreducible and this completes the proof.
\end{proof}

\begin{example}
Let $T\in\cso$ with connected spectrum. Set $A=T^{(\infty)}$. That is, $A$ is the direct sum of infinite copies of $T$. Thus it is easy to check that $C^*(A)$ contains no nonzero compact operators and $\sigma(A)=\sigma(T)$ is connected. By Proposition \ref{P:Connected}, $A$ lies in the compact closure of the class of irreducible complex symmetric operators on $\cH^{(\infty)}$.
\end{example}



\begin{thebibliography}{99}
%

\bibitem{Apostol}
C. Apostol and B. B. Morrel, \emph{On uniform approximation of operators by simple models}, Indiana Univ. Math.
J. \textbf{26} (1977) , 427--442.

\bibitem{Berco}
H. Bercovici and D. Timotin, \emph{Truncated Toeplitz operators and complex symmetries}, Proc. Amer. Math. Soc. \textbf{146} (2018), no. 1, 261--266.


\bibitem{Berg}
I.~D. Berg, \emph{An extension of the Weyl-von Neumann Theorem to normal operators}, Trans.
Amer. Math. Soc. \textbf{160} (1971), 365--371.


\bibitem{Noor15}
P. S. Bourdon and S. Waleed Noor, \emph{Complex symmetry of invertible composition operators}, J. Math. Anal. Appl. \textbf{429} (2015), no. 1, 105--110.

\bibitem{BDF}
L.~G. Brown, R.~G. Douglas, and P.~A. Fillmore, Unitary equivalence modulo the compact operators and extensions of $C^{\ast}$-algebras. \emph{Proceedings of a Conference on Operator Theory (Dalhousie Univ., Halifax, N.S., 1973)}, 58--128. Lecture Notes in Math., Vol. 345. Springer, Berlin, 1973.

\bibitem{CowenDoug}
M.~J. Cowen and R.~G. Douglas, \emph{Complex geometry and operator theory},
Acta Math. \textbf{141} (1978), 187--261.

 %
 %

\bibitem{Conway90}
J.~B. Conway, \emph{A course in functional analysis}, 2nd ed., Graduate Texts in Mathematics, vol. 96, Springer-Verlag, New York, 1990.

\bibitem{Davi96}
K.~R. Davidson, \emph{{$C^*$}-algebras by example}, Fields Institute
  Monographs, vol.~6, American Mathematical Society, Providence, RI, 1996.

%
%


\bibitem{GarPoore13}
S.~R. Garcia and D.~E. Poore, \emph{On the norm closure problem for complex symmetric operators}, Proc. Amer. Math. Soc. \textbf{141} (2013), no. 2, 549.

\bibitem{GarPooreJFA}
S.~R. Garcia and D.~E. Poore, \emph{On the closure of the complex symmetric operators: compact operators and weighted shifts}, J. Funct. Anal. \textbf{264} (2013), no. 3, 691--712.


\bibitem{Gar14}
S.~R. Garcia, E. Prodan, and M. Putinar, \emph{Mathematical and physical aspects of complex
symmetric operators}, J. Phys. A: Math. Gen. \textbf{47} (2014), no. 35, 353001.

\bibitem{Gar06}
S.~R. Garcia and M.~Putinar, \emph{Complex symmetric operators and
  applications}, Trans. Amer. Math. Soc. \textbf{358} (2006), no.~3, 1285--1315.

\bibitem{Gar07}
S.~R. Garcia and M.~Putinar, \emph{Complex symmetric operators and applications. {II}}, Trans.
  Amer. Math. Soc. \textbf{359} (2007), no.~8, 3913--3931.


\bibitem{Gar09}
S.~R. Garcia and W.~R. Wogen, \emph{Complex symmetric partial isometries}, J.
  Funct. Anal. \textbf{257} (2009), no.~4, 1251--1260.

\bibitem{Gar10}
S.~R. Garcia and W.~R. Wogen, \emph{Some new classes of complex symmetric operators}, Trans. Amer.
  Math. Soc. \textbf{362} (2010), no.~11, 6065--6077.

\bibitem{GuoJiZhu}
K.~Guo, Y.~Ji, and S.~Zhu, \emph{A $C^*$-algebra approach to complex symmetric operators}, Trans. Amer. Math. Soc. \textbf{367} (2015), no. 10, 6903--6942.

\bibitem{GuoZhu}
K.~Guo and S.~Zhu, \emph{A canonical decomposition of complex symmetric operator}s, J. Operator Theory \textbf{72} (2014), no. 2, 529--547.


\bibitem{Halmos68}
P.~R. Halmos, \emph{Irreducible operators}, Michigan Math. J. \textbf{15} (1968), 215--223.

 \bibitem{Halmos70}
P.~R. Halmos, \emph{Ten problems in Hilbert space}, Bull. Amer. Math. Soc. \textbf{76} (1970), 887--933.


\bibitem{Halmos82}
P.~R. Halmos, \emph{A {H}ilbert space problem book}, second ed., Graduate Texts
  in Mathematics, vol.~19, Springer-Verlag, New York, 1982, Encyclopedia of
  Mathematics and its Applications, 17.
%

\bibitem{herr87}
D.~A. Herrero, \emph{Spectral pictures of operators in the Cowen-Douglas class $B_n(\Omega)$ and its closure},
 J. Operator Theory \textbf{18} (1987), no. 2, 213--222.

\bibitem{herr89}
D.~A. Herrero, \emph{Approximation of {H}ilbert space operators. {V}ol. 1},
second ed., Pitman Research Notes in Mathematics Series, vol. 224, Longman
Scientific \& Technical, Harlow, 1989.

\bibitem{JiTams}
Y.~Q. Ji, \emph{Quasitriangular + small compact = strongly irreducible}, Trans. Amer. Math. Soc. \textbf{351} (1999), no. 11, 4657--4673.

%

\bibitem{JiangWang06}
C.~L. Jiang and Z.~Y. Wang, \emph{Structure of {H}ilbert space
operators}, World Scientific Publishing Co. Pte. Ltd., Hackensack, NJ, 2006.

\bibitem{Jung}
S. Jung, Y. Kim, E. Ko, and J.~E. Lee,  \emph{Complex symmetric weighted composition operators on $H^2(\mathbb{D})$}, J. Funct. Anal. \textbf{267} (2014), no. 2, 323--351.

\bibitem{Radjavi}
H.~Radjavi and P.~Rosenthal, \emph{The set of irreducible operators is dense},
Proc. Amer. Math. Soc. \textbf{21} (1969), no. 1, 256.

\bibitem{Radjavi2}
H.~Radjavi and P.~Rosenthal, \emph{On roots of normal operators}, J. Math. Anal. Appl. \textbf{34} (1971),
653--664.


\bibitem{ShenZhu}
J. Shen and S. Zhu, \emph{Complex symmetric generators for operator algebras},
 J. Operator Theory \textbf{77} (2017), no.~2, 421--454.



\bibitem{Vocu76}
D.~Voiculescu, \emph{A non-commutative Weyl-von Neumann theorem}, Rev. Roumaine Math. Pures Appl. \textbf{21} (1976), no. 1, 97--113.


\bibitem{NoorJFA}
S. Waleed Noor, \emph{On an example of a complex symmetric composition operator on $H^2(\mathbb{D})$}, J. Funct. Anal. \textbf{269} (2015), no. 6, 1899--1901.

\bibitem{WangYao}
M. Wang and X. Yao, \emph{Complex symmetry of weighted composition operators in several variables}, Internat. J. Math. \textbf{27} (2016), no. 2, 1650017, 14 pp.

\bibitem{Zhu2015OAM}
S.~Zhu, \emph{Complex symmetric triangular operators}, Oper. Matrices \textbf{9} (2015), no. 2, 365--381.

\bibitem{ZhuMA}
S.~Zhu, \emph{Approximation of complex symmetric operators}, Math. Ann. \textbf{364} (2016), no. 1-2, 373--399.

\bibitem{Zhu2017OAM}
S.~Zhu, \emph{Complex symmetric operators, skew symmetric operators and reflexivity}, Oper. Matrices \textbf{11} (2017), no. 4, 941--951.

\bibitem{ZhuLi}
S.~Zhu and C.~G. Li, \emph{Complex symmetric weighted shifts}, Trans. Amer. Math. Soc. \textbf{365} (2013), no. 1, 511--530.



\bibitem{ZhuLiJi}
S.~Zhu, C.~G. Li, and Y.~Q. Ji, \emph{The class of complex symmetric operators
  is not norm closed}, Proc. Amer. Math. Soc. \textbf{140} (2012), no. 5, 1705--1708.

\bibitem{ZhuZhao}
S.~Zhu and J.~Zhao, \emph{Complex symmetric generators of $C^*$-algebras}, J. Math. Anal. Appl. \textbf{456} (2017), no. 2, 796--822.

\bibitem{ZhuZhaoAFA}
S.~Zhu and J.~Zhao, \emph{Similarity orbits of complex symmetric operators}, Ann. Funct. Anal. \textbf{8} (2017), no. 1, 63--74.


\end{thebibliography}
\end{document}